\definecolor{darkgreen}{rgb}{0,0.75,0}
\definecolor{darkred}{rgb}{0.75,0,0}
\definecolor{darkmagenta}{rgb}{0.5,0,0.5}
\newtheorem{theorem}{Theorem}[section]
\newtheorem{cor}[theorem]{Corollary}
\newtheorem{lem}[theorem]{Lemma}
\newtheorem{prop}[theorem]{Proposition}
\theoremstyle{definition}
\newtheorem{definition}[theorem]{Definition}
\newtheorem{question}[theorem]{Question}
\newtheorem{remark}[theorem]{Remark}
\newtheorem{example}[theorem]{Example}
\newtheorem{conjecture}[theorem]{Conjecture}
\numberwithin{equation}{section}
\def\be{\begin{equation}}
	\def\ee{\end{equation}}
\def\bes{\begin{equation*}}
	\def\ees{\end{equation*}}
\newcommand{\abs}[1]{{\left\lvert #1\right\rvert}}
\newcommand\norm[1]{\left\lVert #1\right\rVert} 
\newcommand{\one}{\mathds{1}} 
\newcommand{\loc}[0]{\operatorname{loc}}
\DeclareMathOperator*{\esssup}{ess\,sup}
\newcommand{\Capa}{\operatorname{Cap}}
 \def\sE {{\mathcal E}} \def\sF {{\mathcal F}}
  \def\sL {{\mathcal L}}
\def\sM {{\mathcal M}}
  \def\sX {{\mathcal X}}
 \def\bE {{\mathbb E}} 
 \def\bH {{\mathbb H}} 
 \def\bN {{\mathbb N}} 
\def\bP {{\mathbb P}}  \def\bR {{\mathbb R}}
\def\bS {{\mathbb S}}  
  \def\bX {{\mathbb X}}
\def\bY {{\mathbb Y}} \def\bZ {{\mathbb Z}}
\def\ignore#1{}
\def\to {\rightarrow}
\def\dint{\int\kern-.6em\int}
\newcommand\restr[2]{{
		\left.\kern-\nulldelimiterspace 
		#1 
		\vphantom{\big|} 
		\right|_{#2} 
}} 
\newcommand{\Borel}{\mathscr{B}}
\newcommand{\oneptcpt}[1]{#1_{\partial}}
\def\diam{{\mathop{{\rm diam }}}}
\def\supp{\mathop{{\rm supp}}\nolimits}
\newcommand{\on}[1]{\operatorname{ #1}}
\def\wt{\widetilde}
\def\be{\begin{equation}}
	\def\ee{\end{equation}}
\def\bes{\begin{equation*}}
	\def\ees{\end{equation*}}
\def\ba{\begin{align}}
	\def\ea{\end{align}}
\def\xxea{\end{align}}
\def\bas{\begin{align*}}
\def\eas{\end{align*}}
\definecolor{dgreen}{rgb}{0, 0.6, 0.1}
\definecolor{dblue}{rgb}{0, 0.0, 0.6}
\definecolor{vdblue}{rgb}{0,.08, 0.45}
\definecolor{dred}{rgb}{0.7, 0.0, 0.0}
\definecolor{vdblue}{rgb}{0,.08, 0.45}
\definecolor{purple}{rgb}{0.6, 0.0, 0.6}
\definecolor{mytext}{rgb}{0.1, 0.1, 0.1}
\newcommand{\Lip}{\operatorname{Lip}} 
\newcommand{\Ind}{\operatorname{Ind}} 
\begin{document}
	
	\font\titlefont=cmbx14 scaled\magstep1
	\title{\titlefont Martingale and analytic dimensions coincide under Gaussian heat kernel bounds} 
	\author{Mathav Murugan} 
	\renewcommand{\thefootnote}{}
	\footnotetext{The author is partially supported by NSERC and the Canada research chairs program.}
	\renewcommand{\thefootnote}{\arabic{footnote}}
	\setcounter{footnote}{0}
 
	\maketitle
	\vspace{-0.5cm}
\begin{abstract}
Given a strongly local Dirichlet form on a metric measure space that satisfies Gaussian heat kernel bounds, we show that the martingale dimension of the associated diffusion process coincides with      Cheeger's  analytic dimension of the underlying metric measure space. More precisely, we show that the pointwise version of the martingale dimension introduced by Hino (called the pointwise index) almost everywhere equals the pointwise dimension of the measurable differentiable structure constructed by Cheeger.
Using known properties of spaces that admit a measurable differentiable structure,  we show that the martingale dimension is bounded from above by  the Hausdorff dimension of the underlying metric space, thereby extending an earlier bound obtained by Hino for some self-similar sets. 
\end{abstract}
\section{Introduction}

The notions of martingale   and analytic dimensions are motivated by   classical results concerning stochastic integral representation of martingale additive functionals and  differentiability of Lipschitz functions on Euclidean space, respectively. We begin with an informal description of these dimensions, deferring the technical definitions to later sections.

The concept of martingale dimension is motivated by a theorem of Ventcel' that establishes a stochastic integral representation for a class of martingale additive functionals associated with Brownian motion in $\mathbb{R}^n$. Ventcel' demonstrated that every square-integrable martingale additive functional for Brownian motion in $\mathbb{R}^n$
can be expressed as a sum of $n$ stochastic integrals with respect to a chosen set of  $n$ martingales, each corresponding to a component of the Brownian motion in $\mathbb{R}^n$ \cite{Ven}. 
 
More generally, the martingale dimension of a Markov process is the minimal number $k$ such that every martingale additive functional satisfying suitable integrability conditions can be expressed as a sum of $k$ stochastic integrals with respect to a chosen ``basis" of martingale additive functionals (see  \textsection \ref{ss:mdim} and Definition \ref{d:mdim}). This concept was originally developed by Motoo and Watanabe \cite{MW} for martingale additive functionals corresponding to a Markov process and later generalized to a more abstract setting of filtered probability spaces by Davis and Varaiya \cite{DV} under the name \emph{multiplicity of a filtration}.  

 The notion of analytic dimension arises from Cheeger's far-reaching generalization of Radamacher's theorem \cite[Theorem 4.38]{Che}.      Radmacher's theorem asserts that every Lipschitz function on $\bR^n$ is differentiable almost everywhere  with respect to the Lebesgue measure \cite{Rad},\cite[Theorem 3.2]{EG}. 
 In order to describe Cheeger's notion of differentiability, let us recall that if $f:\bR^N \to \bR$ is differentiable at $x \in \bR^N$ if and only if there exists a unique linear functional $L:\bR^N \to \bR$ such that 
 \[
 f(y)= f(x)+ L(y-x) + o(\norm{x-y}), \quad \mbox{as $y \to x$,}
 \]
 where $\norm{\cdot}$ is Euclidean norm.

  Cheeger formulated a notion of differentiability of a Lipschitz function $f:X \to \bR$ on a metric space $(X,d)$ in terms of charts $(U,\phi)$, where $\phi:X \to \bR^N$ is a Lipschitz function and $U$ is a measurable subset of $X$. On a metric space $(X,d)$, differentiability of $f:X \to \bR$ at a point $x \in U$ with respect to $\phi$ is defined as the existence of a unique linear functional $L:\bR^N \to \bR$  such that $f(\cdot)$ coincides with $f(x) + L(\phi(\cdot)-\phi(x))$ up to first-order; that is,
  \[
  f(y)= f(x)+ L (\phi(y)-\phi(x)) +o(d(x,y)) , \quad \mbox{as $y \to x$.}
  \]
   A chart $(U,\phi)$ on a metric measure space $(X,d,m)$  satisfies the property that \emph{every} Lipschitz function $f:X \to \bR$ is differentiable $m$-almost everywhere  on $U$. 
   The existence of a countable family of charts $\{(U_\alpha,\phi_\alpha)\}$ 
   such that $\phi_\alpha:U_\alpha \to \bR^{N(\alpha)}$ with $m(U_\alpha)>0$ and $\cup_{\alpha} U_\alpha=X$ can be viewed as the analogue of almost everywhere differentiability of Lipschitz functions in the setting of metric measure spaces and hence is a version of Radamacher's theorem.
   Such a collection of charts covering the space is referred to as a \emph{measurable differentiable structure}.
    The smallest $N_0$ such that every chart $(U_\alpha,\phi_\alpha)$ as above has dimension $N(\alpha) \le N_0$   is called the analytic dimension of a metric measure space $(X,d,m)$ (see \textsection \ref{ss:mds} and Definition \ref{d:adim}). We refer to \cite{KM16} for a primer on Cheeger's Radamacher theorem.
  
  Spaces that admit a measurable differentiable structure are also known as Lipschitz differentiability spaces \cite{Bat,Dav}.
  There are alternate approaches to differentiability of Lipschitz functions due to D.~Bate and N.~Weaver \cite{Bat,Wea}.
   In \cite{Bat}, Bate developed a different approach to   differentiability of Lipschitz functions by decomposing the measure into $1$-rectifiable measures which are called \emph{Alberti representations}. 
 	Weaver's approach was to define measurable vector fields (called \emph{derivations}) which are operators acting on Lipschitz functions \cite{Wea}. 
 	We refer to the work of Schioppa \cite{Sch} for a comparison between these approaches.
 	
The goal of this work is to prove the following result (see Theorem \ref{t:main} for the precise statement).
		\begin{quote}
		\emph{If a metric measure space admits a symmetric diffusion process that satisfies Gaussian heat kernel bounds, then the martingale dimension of the diffusion process coincides with analytic dimension of the underlying metric measure space.}
	\end{quote}
Gaussian estimates of the heat kernel  for symmetric diffusions have been widely studied and known to hold in a broad range of settings including manifolds with non-negative Ricci curvature \cite{LY}, uniformly elliptic operators \cite{Aro,Sal92}, weighted manifolds \cite[Theorem 7.1]{GrS}, Lie groups of polynomial growth \cite[Chapter IV]{VSC} and many other examples \cite[Section 3.3]{Sal10}. To illustrate the breadth of examples, we note that for any $d_H \in [1,\infty)$ and $d_M \in \bN$ such that $d_H \ge d_M$, there is a   symmetric diffusion process that satisfies Gaussian heat kernel bounds such that the   martingale dimension of the diffusion process is $d_M$ and the Hausdorff dimension of the underlying space is $d_H$ (see Example \ref{x:ltype}). The condition $d_M \le d_H$ is necessary as we show in Corollary \ref{c:dhdm}.

 	Next, we outline some key ideas to establish the equality between martingale and analytic dimensions. 
	Our approach to compute martingale dimension is to analyze energy measures associated to the Dirichlet form. This approach of analyzing energy measures to compute martingale dimension is originally due to Kusuoka \cite{Kus89,Kus} for diffusions on fractals to answer a question of Barlow and Perkins \cite[Problem 10.6]{BP}. This approach was further developed in the general setting of strongly local Dirichlet forms by Hino \cite{Hin08,Hin10,Hin13,Hin13b,Hin14}. More precisely, Hino defines the \emph{index} of a strongly local Dirichlet form using energy measures and shows that it coincides with the martingale dimension (see Definition \ref{d:index} and \cite[Theorem 3.4]{Hin10}).
	Therefore, in order to obtain the equality between martingale and analytic dimensions, it suffices to show the equality between index and analytic dimension.

	This requires us to relate energy measures to the analysis of Lipschitz functions. A key ingredient of our proof are estimates relating energy measures and pointwise Lipschitz constants established by Koskela and Zhou \cite[Theorem 2.2]{KZ12}. However, in order to apply the results of \cite{KZ12} we need a bi-Lipschitz equivalence between the given metric and the intrinsic metric of the Dirichlet form (see the proof of Lemma \ref{l:lipem}). We establish this bi-Lipschitz equivalence by using recent results in \cite{KM20,Mur20} (see the proof of Proposition \ref{p:hk2cons}(ii)). Our analysis leads to a refined equality between pointwise versions of martingale dimension (called the pointwise index) and pointwise version of analytic dimension which is the pointwise dimension of the charts in the measurable differentiable structure.
	
	Our main result establishing the equality between martingale and analytic dimensions has several useful consequences. One new consequence of our work is  that the martingale dimension is finite under Gaussian heat kernel estimates. 
	Moreover, it allows us to extend an inequality between martingale and spectral dimensions established by Hino in \cite{Hin13} without the use of self-similarity (see Corollary \ref{c:finitedim} and Remark \ref{r:main}(b)). Obtaining this estimate was a primary impetus behind our work. In the survey \cite{Hin14}, the author asks to clarify the relationship between the work of Cheeger on measurable differentiable structures \cite{Che} and index of Dirichlet form defined in \cite{Hin10}. Our main result (Theorem \ref{t:main}) clarifies the precise relationship between these works.
	
	The remainder of the work is organized as follows.  In \textsection \ref{ss:df}, we recall basic notions in the theory of Dirichlet forms such as energy measures, intrinsic metric and Gaussian estimates on the heat kernel. In \textsection \ref{ss:mdim} and \textsection\ref{ss:index}, we recall the definitions of martingale dimension and index. In \textsection \ref{ss:mds}, we recall some fundamental notions in Cheeger's work on measurable differentiable structures such as charts, differentiability with respect to charts, analytic dimension and its pointwise version. 
	In \textsection \ref{ss:doubling}, we recall notions of doubling property in a metric measure space. In \textsection \ref{ss:pi}, we recall the definition of Poincar\'e inequalities used in the setting of Dirichlet forms and metric measure spaces. We establish some useful consequences of Gaussian heat kernel bounds in  Proposition \ref{p:hk2cons}.
	We state and prove the main result in \textsection \ref{s:results} and recall some examples in \textsection \ref{ss:examples}. 
	Many interesting open questions concerning martingale dimension remain. We record some of these questions in  \textsection \ref{ss:questions}. 
	
\section{Setting and Preliminaries}

\subsection{Metric measure Dirichlet space and energy measure} \label{ss:df}
	Throughout this paper, we consider a metric space $(X,d)$ in which
$B(x,r):=B_{d}(x,r):=\{y\in X \mid d(x,y)<r\}$ is relatively compact
(i.e., has compact closure) for any $(x,r)\in X\times(0,\infty)$,
and a Radon measure $m$ on $X$ with full support, i.e., a Borel measure
$m$ on $X$ which is finite on any compact subset of $X$ and strictly positive on any
non-empty open subset of $X$. We always assume that $X$ contains at least two elements,
and such a triple $(X,d,m)$ is referred to as a \emph{metric measure space}.

Let $(\mathcal{E},\mathcal{F})$ be a \emph{symmetric Dirichlet form} on $L^{2}(X,m)$;
that is, $\mathcal{F}$ is a dense linear subspace of $L^{2}(X,m)$, and
$\mathcal{E}:\mathcal{F}\times\mathcal{F}\to\mathbb{R}$
is a non-negative definite symmetric bilinear form which is \emph{closed}
($\mathcal{F}$ is a Hilbert space under the inner product $\mathcal{E}_{1}:= \mathcal{E}+ \langle \cdot,\cdot \rangle_{L^{2}(X,m)}$)
and \emph{Markovian} ($f^{+}\wedge 1\in\mathcal{F}$ and $\mathcal{E}(f^{+}\wedge 1,f^{+}\wedge 1)\leq \mathcal{E}(f,f)$ for any $f\in\mathcal{F}$).
Recall that $(\mathcal{E},\mathcal{F})$ is called \emph{regular} if
$\mathcal{F}\cap C_{\mathrm{c}}(X)$ is dense both in $(\mathcal{F},\mathcal{E}_{1})$
and in $(C_{\mathrm{c}}(X),\|\cdot\|_{\mathrm{sup}})$, and that
$(\mathcal{E},\mathcal{F})$ is called \emph{strongly local} if $\mathcal{E}(f,g)=0$
for any $f,g\in\mathcal{F}$ with $\supp_{m}[f]$, $\supp_{m}[g]$ compact and
$\supp_{m}[f-a\one_{X}]\cap\supp_{m}[g]=\emptyset$ for some $a\in\mathbb{R}$. Here
$C_{\mathrm{c}}(X)$ denotes the space of $\mathbb{R}$-valued continuous functions on $X$ with compact support, and
for a Borel measurable function $f:X\to[-\infty,\infty]$ or an
$m$-equivalence class $f$ of such functions, $\supp_{m}[f]$ denotes the support of the measure $|f|\,dm$,
i.e., the smallest closed subset $F$ of $X$ with $\int_{X\setminus F}|f|\,dm=0$,
which exists since $X$ has a countable open base for its topology; note that
$\supp_{m}[f]$ coincides with the closure of $X\setminus f^{-1}(0)$ in $X$ if $f$ is continuous.
The pair $(X,d,m,\mathcal{E},\mathcal{F})$ of a metric measure space $(X,d,m)$ and a strongly local,
regular symmetric Dirichlet form $(\mathcal{E},\mathcal{F})$ on $L^{2}(X,m)$ is termed
a \emph{metric measure Dirichlet space}, or an \emph{MMD space} in abbreviation. By Fukushima's theorem about regular Dirichlet forms, the MMD space corresponds to a symmetric Markov processes on $X$ with continuous sample paths \cite[Theorem 7.2.1 and 7.2.2]{FOT}.
We refer to \cite{FOT,CF} for details of the theory of symmetric Dirichlet forms.

We recall the definition of energy measure.
Note that $fg\in\mathcal{F}$
for any $f,g\in\mathcal{F}\cap L^{\infty}(X,m)$ by \cite[Theorem 1.4.2-(ii)]{FOT}
and that $\{(-n)\vee(f\wedge n)\}_{n=1}^{\infty}\subset\mathcal{F}$ and
$\lim_{n\to\infty}(-n)\vee(f\wedge n)=f$ in norm in $(\mathcal{F},\mathcal{E}_{1})$
by \cite[Theorem 1.4.2-(iii)]{FOT}.

\begin{definition}\label{d:EnergyMeas}
	Let $(X,d,m,\mathcal{E},\mathcal{F})$ be an MMD space.
	The \emph{energy measure} $\Gamma(f,f)$ of $f\in\mathcal{F}$
	associated with $(X,d,m,\mathcal{E},\mathcal{F})$ is defined,
	first for $f\in\mathcal{F}\cap L^{\infty}(X,m)$ as the unique ($[0,\infty]$-valued)
	Borel measure on $X$ such that
	\begin{equation}\label{e:EnergyMeas}
		\int_{X} g \, d\Gamma(f,f)= \mathcal{E}(f,fg)-\frac{1}{2}\mathcal{E}(f^{2},g) \qquad \textrm{for all $g \in \mathcal{F}\cap C_{\mathrm{c}}(X)$,}
	\end{equation}
	and then by
	$\Gamma(f,f)(A):=\lim_{n\to\infty}\Gamma\bigl((-n)\vee(f\wedge n),(-n)\vee(f\wedge n)\bigr)(A)$
	for each Borel subset $A$ of $X$ for general $f\in\mathcal{F}$. The signed measure $\Gamma(f,g)$ for $f,g \in \mathcal{F}$ is defined by polarization.
\end{definition}

\begin{definition}[\hypertarget{hke}{$\on{HKE}(2)$}]\label{d:HKE}
	Let $(X,d,m,\mathcal{E},\mathcal{F})$ be an MMD space, and let $\{P_{t}\}_{t>0}$
	denote its associated Markov semigroup. A family $\{p_{t}\}_{t>0}$ of
	$[0,\infty]$-valued Borel measurable functions on $X \times X$ is called the
	\emph{heat kernel} of $(X,d,m,\mathcal{E},\mathcal{F})$, if $p_{t}$ is the integral kernel
	of the operator $P_t$ for any $t>0$, that is, for any $t > 0$ and for any $f \in L^{2}(X,m)$,
	\begin{equation*}
		P_{t} f(x) = \int_{X} p_{t}(x,y) f (y)\, dm (y) \qquad \textrm{for $m$-a.e.\ $x \in X$.}
	\end{equation*}
	We say that $(X,d,m,\mathcal{E},\mathcal{F})$ satisfies the \textbf{Gaussian heat kernel estimates}
	\hyperlink{hke}{$\on{HKE}(2)$}, if its heat kernel $\{p_{t}\}_{t>0}$ exists and
	there exist $C_{1},c_{1},C_2,c_{2}\in(0,\infty)$ such that for each $t>0$,
	\begin{equation*} \tag*{\hyperlink{hke}{$\on{HKE}(2)$}}
\frac{c_{2}}{m\bigl(B(x,\sqrt{t})\bigr)} \exp \left(  - C_{2}\frac{d(x,y)^2}{ t}   \right)	\le	p_{t}(x,y) \leq \frac{C_{1}}{m\bigl(B(x,\sqrt{t})\bigr)} \exp \left( -  c_{1}\frac{d(x,y)^2}{ t}   \right)
	\end{equation*}
	for $m$-a.e.\ $x,y \in X$.
\end{definition}

\begin{definition}\label{d:dint}
	Let $(X,d,m,\mathcal{E},\mathcal{F})$ be an MMD space. We define its
	\textbf{intrinsic metric} $d_{\on{int}}:X\times X\to[0,\infty]$ by
	\begin{equation}\label{e:dint}
		d_{\on{int}}(x,y) := \sup \bigl\{f(x) -f(y) \bigm|
		\textrm{$f \in \mathcal{F}_{\on{loc}} \cap \mathcal{C}(X)$, $\Gamma(f,f) \leq m$} \bigr\},
	\end{equation}
	where
	\begin{equation}\label{e:Floc}
		\mathcal{F}_{\loc} := \Biggl\{ f \Biggm|
		\begin{minipage}{285pt}
			$f$ is an $m$-equivalence class of $\mathbb{R}$-valued Borel measurable functions
			on $X$ such that $f \one_{V} = f^{\#} \one_{V}$ $m$-a.e.\ for some $f^{\#}\in\mathcal{F}$
			for each relatively compact open subset $V$ of $X$
		\end{minipage}
		\Biggr\}
	\end{equation}
	and the energy measure $\Gamma(f,f)$ of $f\in\mathcal{F}_{\loc}$ associated with
	$(X,d,m,\mathcal{E},\mathcal{F})$ is defined as the unique Borel measure on $X$
	such that $\Gamma(f,f)(A)=\Gamma(f^{\#},f^{\#})(A)$ for any relatively compact
	Borel subset $A$ of $X$ and any $V,f^{\#}$ as in \eqref{e:Floc} with $A\subset V$;
	note that $\Gamma(f^{\#},f^{\#})(A)$ is independent of a particular choice of such $V,f^{\#}$
	by   \cite[Corollary 3.2.1]{FOT} and \cite[(2.1)]{Hin10}.
\end{definition}

\subsection{Martingale dimension} \label{ss:mdim}
Throughout this subsection, we fix an MMD space $(X,d,m,\sE,\sF)$.
We define the \emph{$1$-capacity}
$\Capa_1(A)$ of $A \subset X$ with respect to Dirichlet form $(\sE,\sF)$ on $L^2(X,m)$ by
\begin{equation} \label{e:defCap1}
	\Capa_1(A) := \inf \bigl\{ \sE_{1}(f,f) \bigm\vert \textrm{$f \in \sF$, $f \geq 1$ $m$-a.e.\ on a neighborhood of $A$} \bigr\},
\end{equation}
where $\sE_{1}:=\sE+\langle\cdot,\cdot\rangle_{L^{2}(X,m)}$ as defined before.
A set $N \subset X$ with $\Capa_1(N)=0$ is called an \emph{exceptional set}. 
Let $A \subset X$. A statement depending on $x \in A$ is said to hold \emph{quasi-everywhere} (abbreviated as `q.e.') on $A$ if
there exists a set $N \subset X$ with $\Capa_1(N)$ such that the statement is true for every
$x \in A \setminus N$. 

By a the fundamental theorem of M.\ Fukushima \cite[Theorem 7.2.1]{FOT},
the assumption of the regularity of the Dirichlet form $(\sE,\sF)$ on $L^2(X,m)$ allows us to
associate to the MMD space $(X,d,m,\sE,\sF)$ an $m$-symmetric Hunt process on $X$.
	We recall that a   \emph{Hunt process}  $\sX = (\Omega, \mathscr{M}, \{\sX_{t}\}_{t\in[0,\infty]},\{\bP_{x}\}_{x \in \oneptcpt{X}})$
on $X$ is a right-continuous strong Markov process on $(\oneptcpt{X},\Borel(\oneptcpt{X}))$
which has the left limit $\sX_{t-}(\omega):=\lim_{s\uparrow t}\sX_{s}(\omega)$ in $\oneptcpt{X}$
for any $(t,\omega)\in(0,\infty)\times\Omega$ and is quasi-left-continuous on $(0,\infty)$
(see \cite[Definition A.1.23-(ii) and Theorem A.1.24]{CF}),
where $\oneptcpt{X} = X \cup \{ \partial \}$ denotes
the one-point compactification of $X$. We always consider each function
$f\colon X \to [-\infty,\infty]$ as being defined also at $\partial$
by setting $f(\partial):=0$. Let $\mathscr{F}_{*}=\{\mathscr{F}_{t}\}_{t \in [0,\infty]}$
denote the minimum augmented admissible filtration of $\sX$ in $\Omega$ as defined in \cite[p.\ 397]{CF},
so that $\mathscr{F}_{*}$ is right-continuous, i.e.,
$\mathscr{F}_{t}=\bigcap_{s \in (t,\infty)}\mathscr{F}_{s}$ for any $t \in [0,\infty)$ by \cite[Theorem A.1.18]{CF}.
 For any measure $\mu$ on $X$, by $\bP_\mu$, we denote the measure $\bP_\mu(\cdot):= \int_{X} \bP_x \, d\mu(x)$ and the integrals with respect to $\bP_\mu$ as $\bE_\mu$. 

\begin{definition}[Additive functional]
A collection $A=\{A_{t}\}_{t\in[0,\infty)}$ of $(-\infty,\infty]$-valued random variables
on $\Omega$ is called a \emph{additive functional} (\emph{AF} for short)
of $\sX$, if the following three conditions hold:
\begin{enumerate}[\rm(i)]\setlength{\itemsep}{0pt}\vspace{-5pt}
	\item\label{it:PCAF-adapted} $A_{t}$ is $\mathscr{F}_{t}$-measurable for any $t \in [0,\infty)$.
	\item\label{it:PCAF-qe-as} There exist $\Lambda \in \mathscr{F}_{\infty}$ and a properly exceptional set $\mathcal{N} \subset X$ for $\sX$
	such that $\bP_{x}(\Lambda)=1$ for any $x \in X \setminus \mathcal{N}$ and $\theta_{t}(\Lambda) \subset \Lambda$ for any $t \in [0,\infty)$.
	\item\label{it:PCAF-sample-path-properties} For any $\omega\in\Lambda$, $[0,\infty) \ni t \mapsto A_{t}(\omega)$ is a $[-\infty,\infty]$-valued  function
	with $A_{0}(\omega)=0$ such that for any $s,t \in [0,\infty)$,
	$A_{t}(\omega)<\infty$ if $t < \zeta(\omega)$,
	$A_{t}(\omega)=A_{\zeta(\omega)}(\omega)$ if $t \geq \zeta(\omega)$,
	and $A_{t+s}(\omega)=A_{t}(\omega)+A_{s}(\theta_{t}(\omega))$.
\end{enumerate}
The sets $\Lambda$ and $\mathcal{N}$ are referred to as a \emph{defining set} and
an \emph{exceptional set}, respectively, of the additive functional $A$. 

We say that an additive functional is a positive continuous additive functional (PCAF) if it is non-negative and continuous on its defining set.

We say that an additive functional is a \emph{martingale additive functional} if $\{M_t\}_{t \in [0,\infty)}$ is an additive functional such that for any $t>0$ and q.e.~$x \in X$, we have $\bE_x[M_t^2] <\infty$ and $\bE_x[M_t]=0$.
The \emph{energy of a martingale additive functional} $\{M_t\}_{t \in [0,\infty)}$ is defined as
\begin{equation} \label{e:energyAF}
	e(M):= \lim_{t \downarrow 0} \frac{1}{2t} \bE_m \left(M_t^2\right)
\end{equation}
whenever the limit exists.
Since $t \mapsto \bE_m[M_t^2]$ is subadditive in $t$, the energy $e(M)$ is well-defined and equals $ \sup_{t>0}\frac{1}{2t}\bE_m \left(M_t^2\right)$. 
By $\overset{\circ}{\mathcal{M}}$ we denote the \emph{martingale additive functionals of finite energy}.
\end{definition}
By the strong locality of the Dirichlet form and \cite[Lemma 5.5.1(ii)]{FOT}, each $M \in \overset{\circ}{\mathcal{M}}$ is a continuous additive functional. 
Therefore by \cite[p.~412, Theorem A.3.3]{FOT}, each $M \in \overset{\circ}{\mathcal{M}}$ admits a positive continuous additive functional $\langle M \rangle$ referred to as the \emph{quadratic variation associated with $M$} that satisfies  
\[
\bE_x [\langle M \rangle_t]= \bE_x[M_t^2], \quad \mbox{for all $t>0$, and q.e.~$x \in X$.}
\]
By the Revuz correspondence \cite[Theorems 5.1.3 and 5.1.4]{FOT}, there exists a unique measure $\mu_{\langle M \rangle}$ such that 
for any non-negative Borel functions $f,h:X \to [0,\infty)$, we have 
\[
\bE_{h \cdot m} \left[ \int_0^t f(\sX_s)\,dA_s \right] = \int_0^t \int_X \bE_x \left[h(\sX_s)\right] f(x)\,\mu_{\langle M \rangle}(dx)\,ds.
\]
The measure $\mu_{\langle M \rangle}$ defined above is called the \emph{energy measure} of the martingale additive function $\{M_t\}_{t \in [0,\infty)}$. The energy and energy measure of a martingale additive functional are related by \cite[(5.2.8)]{FOT}
\[
e(M) = \frac{1}{2} \mu_{\langle M \rangle} (X), \quad \mbox{for all $M \in \overset{\circ}{\sM}$}.
\]
By polarization, for $M,L \in \overset{\circ}{\mathcal{M}}$, we define 
\[
e(M,L)= \frac{1}{2} \left(e(M+L)- e(M)-e(L)\right), \quad \mu_{\langle M, L \rangle}=  \frac{1}{2} \left(\mu_{\langle M+L \rangle} - \mu_{\langle M \rangle} - \mu_{\langle L \rangle}\right).
\]
The space of martingale additive functionals of finite energy  $\overset{\circ}{\mathcal{M}}$ equipped with the inner product $e(\cdot,\cdot)$ is a Hilbert space \cite[Theorem 5.2.1]{FOT}.
We recall the definition of stochastic integral with respect to a martingale additive functional \cite[Theorem 5.6.1]{FOT}.
For $M \in  \overset{\circ}{\mathcal{M}}$ and $f \in L^2(X,\mu_{\langle M \rangle})$, we define the \emph{stochastic integral} $f \bullet M$ as the unique element in  $\overset{\circ}{\mathcal{M}}$ such that 
\[
e(f \bullet M, L) = \frac{1}{2} \int_X f(x) \,\mu_{\langle M,L \rangle}(dx), \quad \mbox{for all  $L \in \overset{\circ}{\mathcal{M}}$.}
\]
The above definition of stochastic integral is essentially due to Kunita and Watanabe \cite[Theorem 2.1]{KW}.
\begin{definition} \label{d:mdim} (Martingale dimension, \cite[Definition 3.3]{Hin10})
The \emph{martingale dimension} of $\{\sX_t\}$ (or equivalently of the Dirichlet form $(\sE,\sF)$ on $L^2(X,m)$) is defined as the smallest non-negative integer $p$ that satisfies the following property: there exists a collection $\{M^{(k)}\}_{1 \le k \le p}$ in $\overset{\circ}{\mathcal{M}}$ such that every $M \in \overset{\circ}{\mathcal{M}}$ has a stochastic integral representation as 
\[
M_t = \sum_{k=1}^p \left(h_k \bullet M^{(k)}\right)_t, \quad \mbox{for all $t >0$ and $\bP_x$-almost surely for q.e.~$x \in X$,}
\]
where $h_k \in L^2(X,\mu_{\langle M^{(k)} \rangle})$ for all $k=1,\ldots,p$.
If such a $p$ does not exist, then the martingale dimension is defined as $+\infty$.
\end{definition}
\subsection{Index of a Dirichlet form} \label{ss:index}
In \cite{Kus89, Kus}, Kusuoka demonstrated that certain linear independence properties of energy measures of functions in the domain of the Dirichlet form can be used to compute the martingale dimension on a class of fractals which led to the notion of index for Dirichlet forms corresponding to self-similar sets.
 This idea of Kusuoka was further developed by Hino \cite{Hin10} for a general MMD space  and led to the notion of \emph{index} of a Dirichlet form.
To recall Hino's definition of index, we first define the notion of a minimal energy-dominant measure.
\begin{definition}[{\cite[Definition 2.1]{Hin10}}]\label{d:minimal-energy-dominant}
	Let $(X,d,m,\mathcal{E},\mathcal{F})$ be an MMD space and let $\Gamma(\cdot,\cdot)$ denote the corresponding energy measure. A $\sigma$-finite Borel measure
	$\nu$ on $X$ is called a \textbf{minimal energy-dominant measure}
	of $(\mathcal{E},\mathcal{F})$ if the following two conditions are satisfied:
	\begin{enumerate} 
		\item[(i)](Domination) For every $f \in \mathcal{F}$, we have $\Gamma(f,f) \ll \nu$.
		\item[(ii)](Minimality) If another $\sigma$-finite Borel measure $\nu'$
		on $X$ satisfies condition (i) with $\nu$ replaced
		by $\nu'$, then $\nu \ll \nu'$.
	\end{enumerate}
	Note that by \cite[Lemmas 2.2, 2.3 and 2.4]{Hin10}, a minimal energy-dominant measure of
	$(\mathcal{E},\mathcal{F})$ always exists and is precisely a $\sigma$-finite Borel measure
	$\nu$ on $X$ such that for each Borel subset $A$ of $X$, $\nu(A)=0$ if and only if
	$\Gamma(f,f)(A)=0$ for all $f\in\mathcal{F}$.
\end{definition}

We recall the definition of index associated to a Dirichlet form. 
\begin{definition}\cite[Definition 2.9]{Hin10} \label{d:index}
	Let $(X,d,m,\mathcal{E},\mathcal{F})$ be an MMD space. Let $\Gamma(\cdot,\cdot)$ denote the corresponding energy measure and let $\nu$ be a minimal energy dominant measure.
	\begin{enumerate}[(i)]
		\item The \emph{ pointwise index} is a measurable function $p_H:X \to \bN \cup \{0,\infty\}$ such that the following hold:
		\begin{enumerate}[(a)]
			\item For any $N \in \bN$, $f_1,\ldots,f_N \in \sF$, we have 
			\[
			\on{rank} \left(\frac{d\Gamma(f_i,f_j)}{d\nu}(x)\right)_{1 \le i,j \le N} \le p_H(x) \quad \mbox{for $\nu$-almost every $x \in X$.}
			\]
			\item For any other function $p_H':X \to \bN \cup \{0,\infty\}$ that satisfies (a) with $p_H'$ instead of $p_H$, then $p_H(x) \le p_H'(x)$ for $\nu$-almost every $x \in X$.
		\end{enumerate}
		\item The \textbf{index} of the MMD space $(X,d,m,\sE,\sF)$ is defined as $\nu$-$\esssup_{x \in X} p_H(x)$, where $p_H$ is a pointwise index.
	\end{enumerate}
	It is easy to see that the pointwise index is well-defined in the $\nu$-almost everywhere sense and does not depend on the choice of $\nu$. Therefore the index is well-defined and takes values in $\bN \cup \{0,\infty\}$.
\end{definition}
Hino shows that the index of an MMD space $(X,d,m,\sE,\sF)$ coincides with the martingale dimension of the associated diffusion process \cite[Theorem 3.4]{Hin10}. In \cite[Theorem 3.4]{Hin13b}, Hino interprets the pointwise index as the pointwise dimension of a measurable tangent space.

\subsection{Cheeger's measurable differentiable structure} \label{ss:mds}

\begin{definition}
	Let $(X,d)$ be a metric space. We say that a function $f:X \to \bR$ is \textbf{Lipschitz} if there exists $K \in (0,\infty)$ such that $\abs{f(x)-f(y)} \le K d(x,y)$ for all $x,y \in X$. 
	The vector space of all Lipschitz functions is denoted by $\Lip(X)$.
	The \emph{pointwise Lipschitz constant} of a function $f: X \to \bR$ is given by
	\[\Lip f(x):= \limsup_{r \to 0} \sup_{d(x,y)<r} \frac{\abs{f(x)-f(y)}}{r}= \limsup_{\substack{y \to x,\\
			y \neq x}} \frac{\abs{f(x)-f(y)}}{d(x,y)}.\]
	The following definition provides a notion of differentiability on a metric space. 
\end{definition}

Cheeger discovered a far-reaching generalization of Rachamacher's theorem. 
In order to describe this version of Radamacher's theorem in a metric measure space, we need a suitable notion of differentiability on a metric space that we recall in the definition below.
\begin{definition} \label{d:derivative} Suppose $f: X \to \bR$ and $\phi=(\phi_1,\ldots,\phi_N): X \to\bR^N$ are Lipschitz functions on a metric measure space $(X,d,m)$. Then $f$ is \emph{differentiable with respect to $\phi$ at $x_0 \in X$} if there is a unique $a=(a_1,\ldots,a_N) \in \bR^N$ such that $f$ and the linear combination $a\cdot \phi = \sum_{i=1}^N a_i \phi_i$ agree to first order near $x_0$: 
	\[
	\limsup_{x \to x_0} \frac{\abs{f(x)-f(x_0)-a \cdot(\phi(x)-\phi(x_0))}}{d(x,x_0)}=0.
	\]
	Equivalently,  $\Lip g(x_0)=0$, where $g(\cdot)= f(\cdot)-\sum_{i=1}^N a_i \phi_i(\cdot)$. The tuple $a \in \bR^N$ is the \emph{derivative of $f$ with respect to $\phi$} and will be denoted by $\partial_\phi f(x_0)$.
\end{definition}
Now that we have a notion of differentiability on a metric space, the analogue of \emph{almost everywhere differentiability} for a metric measure space is given by the notion of a \emph{measurable differentiable structure} defined below. As shown in \cite{Che}, this concept below leads to a new proof of the classical Radamacher's theorem.
\begin{definition} \label{d:chart}
	A \emph{chart of dimension $N$} on a metric measure space $(X,d,m)$ is a pair $(U,\phi)$ where:
	\begin{enumerate}[(i)]
		\item $U \subset X$ is measurable, $m(U) >0$, and $\phi:X \to \bR^N$ is Lipschitz.
		\item Every Lipschitz function $f: X \to \bR$ is differentiable with respect to $\phi$ at $m$-almost every $x_0 \in U$ and the derivative defines a measurable function $\partial_\phi f: U \to \bR^N$.
	\end{enumerate}
A \textbf{measurable differentiable structure}	on $(X,d,m)$ is a  countable collection $\{(U_\alpha,\phi_\alpha)\}$ of charts with uniformly bounded dimension such that $X = \cup_{\alpha} U_\alpha$.
\end{definition}
We note that the notion of charts and measurable differentiable structure are invariant under a bi-Lipschitz change of metric.
\begin{remark} \label{r:bilip}
	Let $\{(U_\alpha,\phi_\alpha)\}$ be a collection of charts that defines a measurable differentiable structure on a metric measure space  $(X,d,m)$. Let $\wt{d}: X \times X \to [0,\infty)$ be a metric on $X$ that is \emph{bi-Lipschitz equivalent} to $d$; that is, there exists $L \in [1,\infty)$ such that $L^{-1}d(x,y) \le \wt{d}(x,y) \le L d(x,y)$ for all $x,y \in X$. Then $\{(U_\alpha,\phi_\alpha)\}$ is a collection of charts that defines a measurable differentiable structure on $(X,\wt{d},m)$.
	 Similarly, it is evident that $(U,\phi)$ is a chart on $(X,d,m)$ if and only if $(U,\phi)$ is a chart on $(X,\wt{d},m)$.
	 	Moreover, for any $f \in \Lip(X)$, the differentiability of $f$ at a point $x \in X$ and the value of the derivative of $f$ with respect to $\phi$ are both invariant under bi-Lipschitz change of metric for any chart $(U,\phi)$.  
\end{remark}

Given a metric measure space, a measurable differentiable structure need not exist \cite[Proposition B.1]{KM16}. However, Cheeger showed that a large class of metric measure spaces admits a measurable differentiable structure \cite[Theorem 4.38]{Che} (see also the exposition of \cite[Theorem 1.3]{KM16}). We refer to \cite[\textsection 1.3]{KM16} for a discussion of different class of examples arising from Euclidean spaces, Carnot groups, glued spaces, Laakso spaces, Bourdon-Pajot buildings, Ricci limit spaces and spaces satisfying synthetic lower bounds on Ricci curvature.

Even if a measurable differentiable structure exists, the associated collection of charts is far from being unique. 
Nevertheless, Cheeger showed that the dimension of the chart of a measurable differentiable structure is well-defined at almost every point. 
Let  $\{(U_\alpha,\phi_\alpha): \alpha \in I \}$ be a measurable differentiable structure on $(X,d,m)$, where $I$ is a countable index set. Then there is a measurable function $d_C: X \to \bN$ such that 
$d_C(x)= N(\alpha)$ for all $\alpha \in I$, $m$-almost every $x \in U_\alpha$, where $N(\alpha)$ is the dimension of the chart $(U_\alpha,\phi_\alpha)$. Furthermore,   up to sets of $m$-measure zero this function $d_C$ does not depend on the choice of the measurable differentiable structure \cite[p. 458]{Che}. This function $d_C$ can be interpreted as the (almost everywhere defined)  pointwise dimension of the fibers of the $L_\infty$ cotangent bundle constructed by Cheeger \cite[p.~458, Definition 4.42]{Che}. 
\begin{definition} \label{d:adim}
	Let $(X,d,m)$ be a metric measure space that admits a measurable differentiable structure.
	 We call the $m$-almost everywhere well-defined function $d_C: X \to \bN$   above as the \textbf{pointwise dimension of the measurable differentiable structure} on $(X,d,m)$. 
	 The \textbf{analytic dimension} of a metric measure space $(X,d,m)$ that admits a measurable differentiable structure is defined as $m$-$\esssup_{x \in X} d_C(x)$.
\end{definition}
From the discussion above,  the analytic dimension of a metric measure space that admits a measurable differentiable structure is well-defined. 

\subsection{Metric and volume doubling properties} \label{ss:doubling}
The   volume doubling property plays an important role in obtaining Gaussian heat kernel estimates as well in Cheeger's generalization of Radamacher's theorem. We recall its definition and the closely related metric doubling property.
\begin{definition}[\hypertarget{vd}{$\on{VD}$}]
	Let $(X,d,m)$ be a metric measure space. We say that $(X,d,m)$ satisfies the
	\textbf{volume doubling property} \hyperlink{vd}{$\on{VD}$},
	if there exists a constant $C_{D}>1$ such that for all $x \in X$ and all $r>0$,
	\begin{equation} \tag*{\hyperlink{vd}{$\on{VD}$}}
		0<m(B(x,2r)) \leq C_{D} m(B(x,r)) < \infty.
	\end{equation}
	Note that if $(X,d,m)$ satisfies \hyperlink{vd}{$\on{VD}$}, then $B(x,r)$ is relatively
	compact (i.e., has compact closure) in $X$ for all $(x,r)\in X\times(0,\infty)$
	by virtue of the completeness of $(X,d)$.
\end{definition}
If $(X,d,m)$ satisfies the volume doubling property, 
then $(X,d)$ satisfies the \textbf{metric doubling property}; that is, there exists $N \in \bN$ such that every ball of radius $r$ can be covered by $N$ balls of radii $r/2$. It is well-known that any metric space satisfying the metric doubling property has finite Hausdorff dimension. 

\subsection{Poincar\'e inequalities} \label{ss:pi}
Similar to the volume doubling property, Poincar\'e inequalities play an important role both in obtaining heat kernel estimates and also in establishing the existence of a measurable differentiable structure. 
The formulation used for these two purposes are different although related as we will see.
\begin{definition} \label{d:pi2}
	We say that $(X,d,m,\mathcal{E},\mathcal{F})$ satisfies the \textbf{Poincar\'e inequality} \hypertarget{pi}{$\operatorname{PI}(2)$},
if there exist constants $C,K\ge 1$ such that 
for all $(x,r)\in X\times(0,\infty)$ and all $f \in \mathcal{F}$,
\begin{equation} \tag*{$\operatorname{PI}(2)$}
	\int_{B(x,r)} (f -   f_{B(x,r)})^2 \,dm  \le C r^2\int_{B(x,K r)}d\Gamma(f,f),
\end{equation}
where $f_{B(x,r)}:= m(B(x,r))^{-1} \int_{B(x,r)} f\, dm$ and $\Gamma$ denotes the energy measure.
\end{definition}
The following Poincar\'e inequality concerns Lipschitz functions on the metric measure space $(X,d,m)$.
\begin{definition} \label{d:ppi}
		We say that $(X,d,m)$ is said to support a $(1,p)$-\emph{Poincar\'e inequality} with $p \in [1,\infty)$ if there exists constants $K \ge 1, C >0$ such that for all $u \in \on{Lip}(X), x \in X$ and $r>0$,
\[
\fint_{B(x,r)} \abs{u-u_{B(x,r)}} \, dm \le C r \left[\fint_{B(x,Kr)}\left( \on{Lip}(u) \right)^p\,dm \right]^{1/p},
\]
where $\fint_A f\,dm$ denotes $\frac{1}{m(A)} \int_A f\,dm$ and $u_{B(x,r)}= \fint_{B(x,r)} u\,dm$.
\end{definition}

We record some useful consequences of Gaussian heat kernel estimates.
The well-known necessary condition in Proposition \ref{p:hk2cons}(i)  is due to Saloff-Coste \cite{Sal} for Brownian motion on Riemannian manifolds and was later extended to general Dirichlet form in \cite{Stu}. In the setting of MMD spaces, it is often assumed   that the metric $d$ is the intrinsic metric (e.g., \cite{Stu,KZ12}) but it turns out that  such an assumption is not necessary as we clarify the relationship between the given metric and intrinsic metric in  Proposition \ref{p:hk2cons}(ii) below.  All these consequences of Gaussian heat kernel estimates are known to experts and follow easily from known results and arguments.

\begin{prop}  \label{p:hk2cons}
Let $(X,d,m,\sE,\sF)$ be an MMD space that satisfies Gaussian heat kernel estimates \hyperlink{hke}{$\on{HKE}(2)$}. 
\begin{enumerate}[(i)]
	\item  (\cite{Sal,Stu})
Then $(X,d,m)$ satisfies the volume doubling property  \hyperlink{vd}{$\on{VD}$} and the MMD space $(X,d,m,\sE,\sF)$ satisfies the Poincar\'e inequality \hyperlink{pi}{$\operatorname{PI}(2)$}. 
\item  (\cite{Mur20,KM20}) The metric $d$ is bi-Lipschitz equivalent to the intrinsic metric $d_{\rm{int}}$ of the MMD space $(X,d,m,\sE,\sF)$.
\item (\cite{KZ12}) The space $\Lip(X) \cap C_c(X)$ satisfies  $\Lip(X) \cap C_c(X) \subset \sF$ and   is dense in the Hilbert space $(\sF,\sE_1)$. Furthermore, $\Lip(X) \subset \sF_{\on{loc}}$.
\item (\cite{KZ12}) The metric measure space $(X,d,m)$ supports a $(1,2)$-Poincar\'e inequality (in the sense of Definition \ref{d:ppi}).
\end{enumerate}
\end{prop}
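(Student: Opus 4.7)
The plan is to assemble the four statements from existing results in the literature, with the main care needed to handle the fact that the given metric \(d\) need not agree with the intrinsic metric \(d_{\on{int}}\), whereas several of the key references are phrased assuming \(d = d_{\on{int}}\).

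For part (i), I would first obtain \hyperlink{vd}{\(\on{VD}\)} by comparing the on-diagonal lower bound \(p_{t}(x,x) \gtrsim m(B(x,\sqrt{t}))^{-1}\) (obtained by setting \(y=x\) in the Gaussian lower bound) with the Gaussian upper bound on a larger ball; this is the standard route to doubling of \(r \mapsto m(B(x,r))\). Then \hyperlink{pi}{\(\on{PI}(2)\)} follows from the Saloff-Coste--Sturm argument via the parabolic Harnack inequality, which is equivalent to \hyperlink{hke}{\(\on{HKE}(2)\)} under \hyperlink{vd}{\(\on{VD}\)}. Crucially, this step does not use any relation between \(d\) and \(d_{\on{int}}\), so no additional input on the metric is required at this stage.

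For part (ii), which is the crux of the proposition, I would invoke \cite{Mur20,KM20} directly. The easy direction \(d_{\on{int}}(x,y) \le C\, d(x,y)\) follows from Davies--Gaffney type estimates implicit in the Gaussian upper bound together with the very definition of the intrinsic metric in \eqref{e:dint}. The reverse inequality \(d(x,y) \le C\, d_{\on{int}}(x,y)\) is the substantive point and is the principal content of the cited works: it uses the full strength of the Gaussian lower bound, volume doubling, and quantitative exit time estimates to upgrade an upper bound into a two-sided comparison.

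For parts (iii) and (iv), I would apply \cite[Theorem 2.2]{KZ12} and the surrounding results of Koskela--Zhou to the MMD space \((X,d_{\on{int}},m,\sE,\sF)\). Parts (i) and (ii) together ensure that this space satisfies \hyperlink{vd}{\(\on{VD}\)} and \hyperlink{pi}{\(\on{PI}(2)\)} in its own intrinsic metric, so the hypotheses of \cite{KZ12} are met. The conclusions there give \(\Lip(X,d_{\on{int}}) \cap C_{c}(X) \subset \sF\), density of this subspace in \((\sF,\sE_{1})\), the inclusion \(\Lip(X,d_{\on{int}}) \subset \sF_{\loc}\), and the \((1,2)\)-Poincar\'e inequality for \((X,d_{\on{int}},m)\). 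By the bi-Lipschitz equivalence from (ii), the classes \(\Lip(X,d)\) and \(\Lip(X,d_{\on{int}})\) coincide as vector spaces, the two pointwise Lipschitz constants differ only by a bounded multiplicative factor, and balls in the two metrics are comparable; hence (iii) and (iv) transfer verbatim to the original metric \(d\). The main obstacle throughout is clearly (ii); once it is in hand, the remaining parts reduce to invoking well-known theorems and exploiting bi-Lipschitz invariance as in Remark \ref{r:bilip}.
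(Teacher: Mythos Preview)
Your proposal is correct and follows essentially the same route as the paper: (i) via Saloff-Coste/Sturm, (ii) via \cite{Mur20,KM20}, and (iii)--(iv) by applying \cite{KZ12} in the intrinsic metric and transferring back through the bi-Lipschitz equivalence from (ii). The one place where the paper is more specific is (ii): rather than a generic appeal to Davies--Gaffney and exit-time estimates, the paper records the precise chain of inputs needed for \cite[Proposition 4.8]{KM20}, namely reverse volume doubling and the chain condition from \cite{Mur20} together with the cutoff energy inequality from \cite[Theorem 1.2]{GHL}; you may want to name these explicitly, since ``invoke \cite{Mur20,KM20} directly'' hides the actual hypotheses that must be verified.
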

\begin{proof}
\begin{enumerate}[(i)]
	\item The volume doubling property follows from the argument in \cite{Sal} by integrating the Gaussian lower bound over suitable ball (see also \cite[p.~161]{Sal02}). Saloff-Coste obtains Poincar\'e inequality in \cite[p.~33]{Sal} by adapting an argument of Kusuoka and Stroock \cite{KS17}. The same argument also applies in our setting \cite{Stu}.
	
	\item We note that $(X,d,m)$ satisfies a reverse volume doubling property due to \cite[Corollary 2.3]{Mur20}. By   \cite[Corollary 1.8]{Mur20},  the metric $d$ satisfies the chain condition (cf. \cite[Definition 1.1]{Mur20}). By  \cite[Theorem 1.2]{GHL}, we obtain the necessary cut-off energy inequality in order to apply \cite[Proposition 4.8]{KM20} and conclude that $d$ and $d_{\rm{int}}$ are bi-Lipschitz equivalent. 
	\item The density of  $\Lip(X) \cap C_c(X)$ in $\sF$ follows from (i), (ii), and \cite[Theorem 2.2 (i)]{KZ12}. The result $\Lip(X) \subset \sF_{\on{loc}}$ follows from \cite[Theorem 2.1]{KZ12}, (i) and (ii). We note that (ii) is used because the results of \cite{KZ12} only apply to the intrinsic metric.
	\item This follows from (i), (ii), \cite[Proposition 2.1]{KZ12}, and invariance of Poincare inequalities in Definitions \ref{d:pi2} and \ref{d:ppi} under a bi-Lipschitz change of metric.
	\qedhere
\end{enumerate}
We state Cheeger's generalization of Radamacher's theorem. We will use this to show that any MMD space that has Gaussian heat kernel bounds admits a measurable differentiable structure.  
\begin{theorem} (\cite[Theorem 4.38]{Che}) \label{t:cheeger}
	Let $(X,d,m)$ be a complete metric space that satisfies the volume doubling property \hyperlink{vd}{$\on{VD}$} and supports a $(1,p)$-Poincar\'e inequality for some $p \in [1,\infty)$. Then there is a measurable differentiable structure on  $(X,d,m)$ and the analytic dimension is bounded by a constant that depends only on the constants involved in the assumptions.
\end{theorem}
\end{proof}
The $(1,p)$-Poincar\'e inequality in Cheeger's work \cite[(4.3)]{Che} is different from that in Definition \ref{d:ppi}. Nevertheless, these $(1,p)$-Poincar\'e inequalities are known to be equivalent due to a result of Keith \cite[Theorem 2]{Kei03} (see also  \cite[Theorem 8.4.2]{HKST}). We refer to \cite{Kei} for a  different proof of Cheeger's theorem and the survey \cite{KM16} for a nice exposition of the key ideas involved in the proof of Theorem \ref{t:cheeger}.

\section{Comparing martingale and analytic dimensions} \label{s:results}

It is known that if an MMD space satisfies Gaussian heat kernel bounds, then the symmetric measure is a minimal energy dominant measure and the underlying metric measure space admits a measurable differentiable structure as we recall next.
 Proposition \ref{p:main}(i) is a special case of a more general result in \cite{KM20} while (ii) follows easily from results in \cite{Che,KZ12,KM20}. 
\begin{prop} \label{p:main}
	Let $(X,d,m,\sE,\sF)$ be an  MMD space that satisfies Gaussian heat kernel estimates 	\hyperlink{hke}{$\on{HKE}(2)$}. Then the following hold:
	\begin{enumerate}[(i)]
		\item  (cf. \cite[Propositions 4.5 and 4.7]{KM20}) $m$ is a minimal energy dominant measure. In particular,  the pointwise index $p_H(\cdot)$ of  $(X,d,m,\sE,\sF)$ is well-defined $m$-almost everywhere.
		\item (cf. \cite[Proposition 2.1]{KZ12} and \cite[Theorem 4.38]{Che}) $(X,d,m)$ admits a measurable differentiable structure.
	\end{enumerate}
\end{prop}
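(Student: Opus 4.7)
My plan is to treat the two assertions separately, leveraging results already collected in Proposition~\ref{p:hk2cons}.

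For (ii), the approach is to invoke Cheeger's theorem (Theorem~\ref{t:cheeger}) directly. By Proposition~\ref{p:hk2cons}(i), $(X,d,m)$ satisfies \hyperlink{vd}{$\on{VD}$}; by Proposition~\ref{p:hk2cons}(iv), it supports a $(1,2)$-Poincar\'e inequality in the sense of Definition~\ref{d:ppi}. Completeness of $(X,d)$ follows from the standing hypothesis that every ball is relatively compact. Theorem~\ref{t:cheeger} then applies with $p=2$, yielding both the measurable differentiable structure and the \emph{a priori} bound on its analytic dimension in terms of the doubling and Poincar\'e constants.

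For (i), two things must be established: \emph{domination}, i.e., $\Gamma(f,f) \ll m$ for every $f \in \sF$, and \emph{minimality}, i.e., if a Borel set $A$ satisfies $\Gamma(f,f)(A)=0$ for every $f \in \sF$, then $m(A)=0$. Domination I would obtain from the upper Koskela--Zhou estimate $\frac{d\Gamma(f,f)}{dm} \le C(\Lip f)^2$ for $f \in \Lip(X)\cap C_c(X)$ supplied by \cite[Theorem~2.2]{KZ12}: since such functions are $\sE_1$-dense in $\sF$ by Proposition~\ref{p:hk2cons}(iii), and the energy measures $\Gamma(f_n,f_n)$ converge in total variation on relatively compact Borel sets as $f_n \to f$ in $\sE_1$, the absolute continuity passes to every $f \in \sF$. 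Minimality is the heart of the matter: for each Borel $A$ with $m(A)>0$, one must exhibit an $f \in \sF$ charging $A$.

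The main obstacle is this minimality step. The cleanest route is to quote \cite[Propositions~4.5 and 4.7]{KM20}, whose structural hypotheses---volume doubling, the $(1,2)$-Poincar\'e inequality, and the bi-Lipschitz equivalence between $d$ and $d_{\on{int}}$---are precisely what Proposition~\ref{p:hk2cons} supplies. A self-contained alternative is to bootstrap from (ii): on each Cheeger chart $(U_\alpha,\phi_\alpha)$, a generic linear combination $g = \sum c_i \phi_{\alpha,i}$ has $\Lip g(x) > 0$ at $m$-a.e.\ point of $U_\alpha$ (otherwise the chart would fail to resolve $g$), and the reverse Koskela--Zhou bound $\frac{d\Gamma(g,g)}{dm} \ge c(\Lip g)^2$ then forces $\Gamma(g,g)$ to strictly charge every positive-$m$-measure subset of $U_\alpha$. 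Covering $X$ by countably many charts and combining with the domination step above realizes $m$ as a minimal energy-dominant measure, and the well-definedness of $p_H(\cdot)$ follows from Definition~\ref{d:index}.
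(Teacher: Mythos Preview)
Your proposal is correct and, at the level of logical dependencies, matches the paper's proof: part~(ii) is obtained exactly as in the paper by combining Proposition~\ref{p:hk2cons}(i),(iv) with Theorem~\ref{t:cheeger}, and your primary route for part~(i) is likewise to invoke \cite[Propositions~4.5 and~4.7]{KM20} after verifying its hypotheses via Proposition~\ref{p:hk2cons}. The paper's own proof is in fact terser---it simply records (i) as ``a special case of \cite[Propositions~4.5 and~4.7]{KM20}'' without separating domination from minimality.

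Where you go beyond the paper is in offering a self-contained alternative for minimality that bootstraps from part~(ii): use a Cheeger chart $(U_\alpha,\phi_\alpha)$, observe that any nonzero linear combination $g=\sum c_i\phi_{\alpha,i}$ has $\Lip g>0$ $m$-a.e.\ on $U_\alpha$ (by uniqueness of the derivative of the zero function), and then apply the reverse Koskela--Zhou bound to conclude $\Gamma(g,g)$ charges every positive-measure subset of $U_\alpha$. This argument is sound and in fact anticipates the mechanism of Proposition~\ref{p:lbindex} (compare \eqref{e:lb2} and \eqref{e:lb3}). Two small points worth making explicit if you pursue this route: first, the chart coordinates $\phi_{\alpha,i}$ lie only in $\sF_{\loc}$, so one needs a localization step (via strong locality) to produce an element of $\sF$ whose energy measure charges a given set; second, your domination step relies on passing absolute continuity through an $\sE_1$-limit, which is justified by the inequality $\bigl|\Gamma(f_n,f_n)(A)^{1/2}-\Gamma(f,f)(A)^{1/2}\bigr|\le \Gamma(f_n-f,f_n-f)(A)^{1/2}$. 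Both are routine, but should be stated.
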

\begin{proof}
	\begin{enumerate}[(i)]
		\item The first claim is a special case of \cite[Propositions 4.5 and 4.7]{KM20} while the second claim follows from the definition of pointwise index.
		\item This is an immediate consequence of Proposition \ref{p:hk2cons}(i),(iv) and Cheeger's Radamacher theorem recalled in Theorem \ref{t:cheeger}.
	\end{enumerate}
\end{proof}
The above proposition implies that under Gaussian heat kernel bounds, the pointwise index of the MMD space $p_H(\cdot)$ and the pointwise dimension of the measurable differentiable structure of the underlying metric measure space are both well-defined almost everywhere with respect to the reference measure. 
The following is the main result of this work and shows the almost everywhere equality of  pointwise dimension of the measurable differentiable structure constructed by Cheeger and pointwise index in the sense of Hino.
\begin{theorem}\label{t:main}
	Let $(X,d,m,\sE,\sF)$ be an  MMD space that satisfies Gaussian heat kernel estimates 	\hyperlink{hke}{$\on{HKE}(2)$}. Then the   The pointwise index $p_H(\cdot)$ of the MMD space $(X,d,m,\sE,\sF)$  agrees  with the pointwise dimension $d_C(\cdot)$ of a measurable differentiable structure on  $(X,d,m)$; that is 
	\begin{equation} \label{e:eqptwise}
		p_H(x)=d_C(x) \quad \mbox{for $m$-almost every $x \in X$.}
	\end{equation}
	In particular, the    martingale dimension of the associated diffusion process  coincides with the analytic dimension of $(X,d,m)$.
\end{theorem}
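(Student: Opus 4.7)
By Proposition~\ref{p:main}, both the pointwise index $p_H$ and the pointwise dimension $d_C$ of the Cheeger chart are well-defined $m$-a.e.\ functions on $X$. Since the martingale dimension coincides with the $m$-essential supremum of $p_H$ by \cite[Theorem~3.4]{Hin10}, and the analytic dimension is by definition the $m$-essential supremum of $d_C$, the second assertion in the theorem is an immediate consequence of \eqref{e:eqptwise}. My plan is therefore to prove the two pointwise inequalities $p_H(x)\le d_C(x)$ and $p_H(x)\ge d_C(x)$ separately, for $m$-a.e.\ $x\in X$.

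The main technical tool will be the two-sided Koskela--Zhou estimate \cite[Theorem~2.2]{KZ12}, which under the hypotheses (using Proposition~\ref{p:hk2cons}(i)--(iii)) provides constants $C_1,C_2>0$ such that for every $f\in\Lip(X)$,
\[
C_1 (\Lip f(x))^2 \,\le\, \frac{d\Gamma(f,f)}{dm}(x) \,\le\, C_2 (\Lip f(x))^2 \qquad \text{for $m$-a.e.\ $x\in X$.}
\]
Fix a chart $(U,\phi)$ with $\phi=(\phi_1,\ldots,\phi_N)$ of dimension $N=d_C(\cdot)$ on $U$. For any $g\in\Lip(X)$, Cheeger differentiability yields a measurable $a_g\colon U\to\bR^N$ with $\Lip(g-\sum_j a_{g,j}(x_0)\phi_j)(x_0)=0$ for $m$-a.e.\ $x_0\in U$. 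Applying the upper Koskela--Zhou bound to each of the \emph{countably many} Lipschitz functions $g-\sum_j q_j\phi_j$, $q\in\bQ^N$, using the triangle inequality
\[
\Lip\Bigl(g-\sum_j q_j\phi_j\Bigr)(x_0)\,\le\, |a_g(x_0)-q|\cdot\max_j \Lip\phi_j(x_0),
\]
and expanding by bilinearity of $\Gamma$, then letting $q\to a_g(x_0)$ along rationals, I expect to deduce the key pointwise identity
\begin{equation}\label{e:propid}
\frac{d\Gamma(g,g)}{dm}(x_0) \,=\, \sum_{i,j=1}^{N} a_{g,i}(x_0)\,a_{g,j}(x_0)\,\frac{d\Gamma(\phi_i,\phi_j)}{dm}(x_0) \qquad \text{for $m$-a.e.\ $x_0\in U$,}
\end{equation}
together with its polarization for different Lipschitz functions.

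For the upper bound $p_H\le d_C$: by Proposition~\ref{p:hk2cons}(iii), choose a countable family $\{g_k\}\subset\Lip(X)\cap C_{\mathrm c}(X)$ that is $\sE_1$-dense in $\sF$. The polarized form of \eqref{e:propid} shows that the Gram matrix $\bigl(d\Gamma(g_k,g_\ell)/dm\bigr)(x_0)$ factors as $A(x_0)\,M(x_0)\,A(x_0)^{\top}$, where $M$ is the $N\times N$ Gram matrix of the chart functions; its rank is thus at most $N=d_C(x_0)$, and $\sE_1$-density propagates this bound to the definition of $p_H$. For the lower bound $p_H\ge d_C$: replace each $\phi_i$ by its product with a Lipschitz bump $\chi\in\sF\cap C_{\mathrm c}(X)$ with $\chi\equiv 1$ near a point $x_0\in U$, which leaves $d\Gamma(\phi_i,\phi_j)/dm(x_0)$ unchanged by strong locality and places the cut-offs in $\sF$. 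If on a positive-measure subset of $U$ the matrix $M(x_0)$ had rank strictly less than $N$, a measurable selection would produce $b(x_0)\in\bR^N\setminus\{0\}$ annihilating $M(x_0)$, and the \emph{lower} Koskela--Zhou estimate, applied by rational approximation of $b(x_0)$ exactly as in the derivation of \eqref{e:propid}, would force $\Lip\bigl(\sum_i b_i(x_0)\phi_i\bigr)(x_0)=0$. But then $\sum_i b_i(x_0)\phi_i$ would be differentiable with respect to $\phi$ at $x_0$ with derivative $0$, while it is tautologically also differentiable with derivative $b(x_0)\ne 0$, contradicting the uniqueness clause in Definition~\ref{d:derivative}.

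The main obstacle I anticipate is the delicate passage from the Koskela--Zhou inequalities, whose null set depends on the chosen function, to the truly pointwise identity \eqref{e:propid} involving the measurable $x$-dependent coefficients $a_g(x)$; the rational-approximation scheme sketched above is what reconciles these, by exploiting only countably many fixed Lipschitz test functions while still capturing arbitrary real linear combinations in the limit. Once \eqref{e:propid} is available, both inequalities are straightforward linear-algebra/uniqueness arguments.
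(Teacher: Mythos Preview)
Your proposal is correct. For the lower bound $p_H\ge d_C$ your argument is essentially the paper's Proposition~\ref{p:lbindex}: both combine the uniqueness clause in the definition of a chart (which forces $\Lip(\lambda\cdot\phi)(x_0)>0$ for every nonzero $\lambda$) with the lower Koskela--Zhou estimate, passing through a countable dense set of directions, to conclude that the chart Gram matrix $M_\phi(x_0)$ is positive definite; your bump-function localization is an explicit version of the paper's Lemma~\ref{l:locrank}. For the upper bound $p_H\le d_C$, however, your route is genuinely different from the paper's Proposition~\ref{p:ubindex}. The paper argues by contradiction: assuming $p_H\ge N+1$ on a set of positive measure, \cite[Lemma~2.5]{Hin10} produces $N+1$ Lipschitz functions with full-rank energy Gram matrix, Koskela--Zhou converts this into infinitesimal independence, and then Keith's chart-construction machinery \cite[Sublemmas~7.3.5--7.3.8]{Kei} is invoked to manufacture a chart of dimension $\ge N+1$ inside $U$, contradicting the well-definedness of $d_C$. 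You instead establish the exact factorization $\tfrac{d\Gamma(g,g)}{dm}=a_g^{\top}M_\phi\, a_g$ directly --- the non-negative quadratic $q\mapsto \tfrac{d\Gamma(g-q\cdot\phi,\,g-q\cdot\phi)}{dm}(x_0)$ vanishes at $q=a_g(x_0)$, so its gradient there is zero, which also gives $\tfrac{d\Gamma(g,\phi_j)}{dm}=(M_\phi\, a_g)_j$ and hence the polarized version --- after which the rank bound is pure linear algebra together with the density-propagation step you note. This is more self-contained, bypassing Keith's construction entirely, and as a bonus yields a genuine pointwise formula for the energy density in Cheeger coordinates rather than only a rank estimate; the paper's route, by contrast, keeps the geometric intuition that ``$N+1$ energy-independent directions would form a larger chart'' in the foreground.
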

The proof of Theorem \ref{t:main} needs further preparation. Before we prove it, we state a consequence of our main result.
Using known results on analytic dimension, we obtain the following result that implies that the martingale dimension is bounded from above by the Hausdorff dimension under Gaussian heat kernel bounds. Even the finiteness of martingale dimension stated below is new to the best of the author's knowledge. 
\begin{cor} \label{c:finitedim}
		Let $(X,d,m,\sE,\sF)$ be a  MMD space that satisfies Gaussian heat kernel estimates 	\hyperlink{hke}{$\on{HKE}(2)$}. Then the martingale dimension of the associated diffusion process is bounded from above by the Hausdorff dimension of the metric space $(X,d)$.
		In particular, the martingale dimension is finite.
\end{cor}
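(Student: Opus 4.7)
The plan is to use Theorem \ref{t:main} to reduce Corollary \ref{c:finitedim} to a purely metric-geometric comparison. By Theorem \ref{t:main}, the martingale dimension of the associated diffusion process equals the analytic dimension $m$-$\esssup_{x \in X} d_C(x)$ of $(X,d,m)$. Writing $\dim_H(X,d)$ for the Hausdorff dimension of $(X,d)$, it therefore suffices to establish the pointwise bound $d_C(x) \le \dim_H(X,d)$ for $m$-almost every $x \in X$, together with $\dim_H(X,d) < \infty$.

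First I would dispose of the finiteness. By Proposition \ref{p:hk2cons}(i), $(X,d,m)$ satisfies volume doubling \hyperlink{vd}{$\on{VD}$}, which in particular forces the metric doubling property. A standard iterated covering argument then shows that any metric doubling space has finite Hausdorff dimension, bounded above by $\log_{2}$ of the metric doubling constant; hence $\dim_H(X,d)<\infty$.

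Next I would establish the pointwise inequality. For each chart $(U,\phi)$ of dimension $N$ in the measurable differentiable structure supplied by Proposition \ref{p:main}(ii), the plan is to show $\dim_H(U) \ge N$; combining this with $d_C(x) = N$ for $m$-a.e.\ $x \in U$ and summing over the countable family of charts then yields $d_C(x) \le \dim_H(X,d)$ for $m$-almost every $x \in X$. The inequality $\dim_H(U) \ge N$ in turn reduces to checking that $\phi(U) \subset \mathbb{R}^N$ has positive $N$-dimensional Lebesgue measure, since then $\dim_H(\phi(U)) = N$ and the Lipschitz property of $\phi$ gives $\dim_H(U) \ge \dim_H(\phi(U)) = N$.

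The main obstacle will be verifying (or citing cleanly) that $\phi(U)$ has positive $N$-dimensional Lebesgue measure for an $N$-dimensional chart $(U,\phi)$. This is folklore in the theory of Lipschitz differentiability spaces and is ultimately a consequence of the uniqueness clause in Definition \ref{d:derivative}: were $\phi(U)$ Lebesgue-null in $\mathbb{R}^N$, one could exhibit a nontrivial linear functional on $\mathbb{R}^N$ that, through a Lipschitz approximation argument, admits two distinct derivatives with respect to $\phi$ on a positive-measure subset of $U$, contradicting the chart property. Once this claim is in place, the remainder of the argument is essentially automatic, and Corollary \ref{c:finitedim} follows.
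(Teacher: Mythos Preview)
Your overall strategy coincides with the paper's: invoke Theorem \ref{t:main} to identify the martingale dimension with the analytic dimension, and then bound the analytic dimension by the Hausdorff dimension. The paper's proof of the second step is a one-line citation to \cite[Theorem 5.3]{BKO} and \cite[Theorem 6.6]{Bat}; you instead try to argue it directly via the claim that $\phi(U)\subset\bR^N$ has positive Lebesgue measure for every $N$-dimensional chart $(U,\phi)$.

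The issue is with that claim. It is \emph{not} folklore: it is essentially equivalent to the inequality ``analytic dimension $\le$ Hausdorff dimension'' you are trying to prove, and this was an open question resolved only recently by the results the paper cites. Your sketched contradiction does not work as stated. If $L$ is a nonzero linear functional on $\bR^N$, then $L\circ\phi$ is Lipschitz and its derivative with respect to $\phi$ is simply $L$ itself, uniquely, at every point of differentiability---this holds regardless of whether $\phi(U)$ is Lebesgue-null. Nullness of $\phi(U)$ does not by itself produce non-uniqueness of derivatives for linear test functions, nor does any elementary ``Lipschitz approximation argument'' repair this; the genuine obstruction is subtler and passes through Alberti representations (Bate) or multilinear Kakeya-type estimates (Bate--Kangasniemi--Orponen). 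Indeed, as noted in Remark \ref{r:main}(c), earlier and easier arguments only yielded bounds by the Assouad or Lipschitz dimensions, which in general do not dominate the Hausdorff dimension.

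So your reduction is correct, but you should replace the heuristic final paragraph with a clean citation to \cite{BKO} or \cite{Bat}, exactly as the paper does.
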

\begin{proof}
	This is an immediate consequence of Theorem \ref{t:main} and known bounds on analytic dimension in terms of Hausdorff dimension follows from  \cite[Theorem 5.3]{BKO} and \cite[Theorem 6.6]{Bat}.  
\end{proof}

\begin{remark} \label{r:main}
\begin{enumerate}[(a)]
	\item The bound in Corollary \ref{c:finitedim} is sharp and is the \emph{only constraint} between Hausdorff and martingale dimensions. Furthermore, for any $d_M \in \bN, d_H \in [d_M,\infty)$, there exists an MMD space $(X,d,m,\sE,\sF)$ that satisfies Gaussian heat kernel estimates such that the martingale dimension of the associated diffusion is $d_M$ and the Hausdorff dimension of the underlying metric space $(X,d)$ is $d_H$ (see Example \ref{x:ltype}).
	
	\item If the symmetric measure  is \emph{$Q$-Ahlfors regular} (that is, there exists $C \in (1,\infty)$ such that  $C^{-1} r^Q \le m(B(x,r)) \le C r^Q$ for all $x \in X, r < \diam(X,d)$), then $Q$ is the Hausdorff dimension of $(X,d)$ and also under Gaussian heat kernel bounds $Q$ is also the \emph{spectral dimension} (cf. \cite[Definition 3.23]{Bar} for the terminology and \cite[(3.50)]{Bar} for the justification of this terminology). In this case, Corollary \ref{c:finitedim} implies that martingale dimension is less than or equal to spectral dimension.  Such an inequality between spectral and martingale dimensions was first obtained by Hino \cite[Theorem 3.5]{Hin13} for diffusions on some self-similar sets. Unlike \cite{Hin13} we do not assume self-similarity, however our space-time scaling for the heat kernel is Gaussian. We conjecture that a similar estimate is true if we replace Gaussian estimates with the more general sub-Gaussian heat kernel estimate (see Conjecture \ref{c:dsdm}).
	\item A slightly weaker bound on analytic dimension by the Assouad dimension of the underlying metric space was shown earlier in \cite[Corollary 4.6]{Sch}( (see also \cite[ Corollary 8.5]{Dav} for a slightly weaker bound in terms of Assouad dimension).  There is yet another upper bound on analytic dimension in terms of Lipschitz dimension introduced by Cheeger and Kleiner \cite{CK} due to David \cite[Theorem 7.6]{Dav21}. In general, the Hausdorff and Lipschitz dimensions are not comparable \cite[{Propositions 3.8 and Theorem 6.8}]{Dav21}.
\end{enumerate}
\end{remark}

We recall the notion of linear dependence in an infinitesimal sense which plays an important role in the construction of a measurable differentiable structure.
\begin{definition}
	An $N$-tuple of functions $\mathbf{f}=(f_1,\ldots,f_N)$, where $f_i : X \to \bR$ for $ 1 \le i \le N$ is \emph{infinitesimally dependent} at $x \in X$ if there exists $\lambda \in \bR^{N} \setminus \{0\}$ such that 
	$$\Lip(\lambda\cdot \mathbf{f})(x)=0.$$ 
	We denote the set where $\mathbf{f}: X \to \bR^N$ is \emph{not infinitesimally dependent} by $\Ind(\mathbf{f})$.
\end{definition}
	We note that $\Ind(f)$ is a Borel measurable set by \cite[Lemma 7.2.3]{Kei}.

The following is a slight strengthening of property (a)  of pointwise index in Definition \ref{d:index}.
\begin{lem} \label{l:locrank}
	Let $(X,d,m,\sE,\sF)$ be an MMD space and let $\nu$ be a minimal energy dominant measure.  For any $N \in \bN$, $f_1,\ldots,f_N \in \sF_{\on{loc}}$, we have 
	\[
	\on{rank} \left(\frac{d\Gamma(f_i,f_j)}{d\nu}(x)\right)_{1 \le i,j \le N} \le p_H(x) \quad \mbox{for $\nu$-almost every $x \in X$.}
	\]
\end{lem}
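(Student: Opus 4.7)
The plan is a routine localization that reduces the claim to property (a) in Definition \ref{d:index}(i), which is assumed only for $N$-tuples in $\sF$. The key observation is that on any relatively compact open set, the energy measures of $\sF_{\on{loc}}$-functions literally equal the energy measures of $\sF$-representatives, by Definition \ref{d:dint}.

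Since balls in $X$ are relatively compact and $X$ admits a countable open base, I will cover $X$ by a sequence $\{V_k\}_{k \in \bN}$ of relatively compact open sets with $X = \bigcup_{k \in \bN} V_k$. Given $f_1,\dots,f_N \in \sF_{\on{loc}}$, the definition of $\sF_{\on{loc}}$ in \eqref{e:Floc} lets me pick, for each $k$ and each $i$, a function $f_i^{(k)} \in \sF$ with $f_i \one_{V_k} = f_i^{(k)} \one_{V_k}$ $m$-a.e.

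Applying the extension of $\Gamma$ to $\sF_{\on{loc}}$ from Definition \ref{d:dint} to each of $f_i$ and to $f_i \pm f_j$, then polarizing, yields
\[
\Gamma(f_i,f_j)(A) = \Gamma(f_i^{(k)},f_j^{(k)})(A) \qquad \text{for every Borel } A \subset V_k \text{ and all } i,j,
\]
so the Radon--Nikodym derivatives with respect to $\nu$ coincide $\nu$-a.e. on $V_k$. Now property (a) of Definition \ref{d:index}(i) applied to the tuple $(f_1^{(k)},\dots,f_N^{(k)}) \in \sF^N$ provides a $\nu$-null set $N_k \subset X$ outside of which the rank of $\bigl(d\Gamma(f_i^{(k)},f_j^{(k)})/d\nu(x)\bigr)_{1 \le i,j \le N}$ is bounded by $p_H(x)$. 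Combining the two displays gives the same rank bound for the matrix $\bigl(d\Gamma(f_i,f_j)/d\nu(x)\bigr)_{1 \le i,j \le N}$ for $\nu$-a.e. $x \in V_k$, and taking the union $\bigcup_k N_k$ concludes the proof.

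There is no real obstacle: the lemma is essentially built into the local construction of $\Gamma$ on $\sF_{\on{loc}}$ in Definition \ref{d:dint} and the definition of $p_H$ in Definition \ref{d:index}; the only content is the countable-cover bookkeeping and a polarization step.
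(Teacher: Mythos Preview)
Your proposal is correct and follows essentially the same localization argument as the paper's proof: the paper covers $X$ by the increasing balls $B(x_0,n)$, picks $\sF$-representatives $g_i$ agreeing with $f_i$ on each ball, invokes strong locality (via \cite[Corollary 3.2.1]{FOT} and \cite[(2.1)]{Hin10}, which is exactly the content you extract from Definition \ref{d:dint} plus polarization) to identify the energy-measure matrices on each ball, and then applies property (a) of Definition \ref{d:index}. The only cosmetic difference is your use of a generic countable cover by relatively compact open sets in place of the concentric balls.
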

\begin{proof}

Fix $x_0 \in X$. By the assumption that balls are precompact, for any  $n \in \bN$,
there exists $g_1,\ldots,g_N \in \sF$ such that $g_i=f_i$ $m$-almost everywhere on $B(x_0,n)$.
Therefore by property (a) of Definition \ref{d:index} and the strong locality of $(\sE,\sF)$ (in particular, \cite[Corollary 3.2.1]{FOT} and \cite[(2.1)]{Hin10}),   we have 
	\[
\on{rank} \left(\frac{d\Gamma(f_i,f_j)}{d\nu}(x)\right)_{1 \le i,j \le N} = \on{rank} \left(\frac{d\Gamma(g_i,g_j)}{d\nu}(x)\right)_{1 \le i,j \le N} \le p_H(x)  
\]
for $\nu$-almost every $x \in B(x_0,n)$. This leads to the desired conclusion as $X = \cup_{n \in \bN} B(x_0,n)$.
\end{proof}

The following lemma is at the heart of our proof and is essentially due to Koskela and Zhou \cite{KZ12}.
It is the key estimate that serves as a bridge between the analysis of the   Dirichlet form with the analysis of Lipschitz functions on the underlying metric measure space. Ultimately, this leads to the relation between martingale and analytic dimensions.
It is helpful to recall that $\Lip(X) \subset \sF_{\on{loc}}$ from Proposition \ref{p:hk2cons}.
\begin{lem} \label{l:lipem}
	Let $(X,d,m,\sE,\sF)$ be an  MMD space that satisfies Gaussian heat kernel estimates 	\hyperlink{hke}{$\on{HKE}(2)$}. 
	There exists $C \in [1,\infty)$ such that 
	\begin{equation}  \label{e:}
C^{-1} \sqrt{\frac{d\Gamma(f,f)}{dm}(x)} \le \Lip f(x) \le  C \sqrt{\frac{d\Gamma(f,f)}{dm}(x)} 
	\end{equation}
	for all $f \in \Lip(X)$, and for $m$-almost every $x \in X$.
\end{lem}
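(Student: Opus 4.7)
The plan is to reduce the claim to \cite[Theorem 2.2]{KZ12}, which establishes precisely such a bi-Lipschitz comparison, but is formulated under the assumption that the given metric \emph{is} the intrinsic metric of the Dirichlet form. The reduction is made possible by the bi-Lipschitz equivalence $d \asymp d_{\on{int}}$ provided by Proposition \ref{p:hk2cons}(ii).

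First, I would collect the hypotheses already available from Proposition \ref{p:hk2cons}: the space $(X,d,m)$ satisfies \hyperlink{vd}{$\on{VD}$}, the MMD space satisfies \hyperlink{pi}{$\operatorname{PI}(2)$}, the inclusion $\Lip(X) \subset \sF_{\on{loc}}$ holds so that $\Gamma(f,f)$ is defined for $f \in \Lip(X)$, and there exists $L \in [1,\infty)$ such that $L^{-1} d(x,y) \le d_{\on{int}}(x,y) \le L\, d(x,y)$ for all $x,y \in X$. Moreover, by Proposition \ref{p:main}(i), $m$ is a minimal energy-dominant measure, so $\Gamma(f,f) \ll m$ and the Radon-Nikodym derivative $d\Gamma(f,f)/dm$ exists.

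Second, I would observe that pointwise Lipschitz constants are invariant, up to the constant $L$, under a bi-Lipschitz change of metric. Writing $\Lip^d f$ and $\Lip^{d_{\on{int}}} f$ for the pointwise Lipschitz constants taken with respect to $d$ and $d_{\on{int}}$ respectively, a direct check from the definition gives
\[
L^{-1} \Lip^d f(x) \le \Lip^{d_{\on{int}}} f(x) \le L\, \Lip^d f(x), \qquad x \in X.
\]
In particular, the class $\Lip(X)$ is the same for both metrics. The energy measure $\Gamma(f,f)$, on the other hand, is intrinsic to $(\sE,\sF,m)$ and does not depend on the choice of metric.

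Third, I would apply \cite[Theorem 2.2]{KZ12} to the MMD space $(X, d_{\on{int}}, m, \sE, \sF)$, whose intrinsic metric is $d_{\on{int}}$ itself and which inherits \hyperlink{vd}{$\on{VD}$} and \hyperlink{pi}{$\operatorname{PI}(2)$} from the bi-Lipschitz invariance noted in the proof of Proposition \ref{p:hk2cons}(iv). Their theorem yields a constant $C_0$, depending only on the doubling and Poincar\'e constants, such that for every $f \in \Lip(X)$,
\[
C_0^{-1} \sqrt{\tfrac{d\Gamma(f,f)}{dm}(x)} \le \Lip^{d_{\on{int}}} f(x) \le C_0 \sqrt{\tfrac{d\Gamma(f,f)}{dm}(x)} \quad \text{for $m$-a.e.\ } x \in X.
\]
Combining this with the two-sided comparison of pointwise Lipschitz constants above gives \eqref{e:} with $C := C_0 L$.

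The main obstacle is conceptual rather than computational: the Koskela-Zhou estimates are built for the intrinsic metric, so the whole point is to verify that our given metric $d$ can serve as a proxy for $d_{\on{int}}$. This is exactly the content of Proposition \ref{p:hk2cons}(ii), which itself rests on the non-trivial results of \cite{Mur20,KM20}. Once the bi-Lipschitz equivalence is in hand, the rest of the proof is a routine change-of-metric argument.
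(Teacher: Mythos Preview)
Your proposal is correct and follows exactly the paper's own approach: invoke \cite[Theorem 2.2(ii)]{KZ12} for the intrinsic metric and transfer the estimate to $d$ via the bi-Lipschitz equivalence $d \asymp d_{\on{int}}$ from Proposition \ref{p:hk2cons}(ii). The paper's proof is a one-line citation of precisely these two ingredients, and your write-up simply unpacks the details.
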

\begin{proof}
	It follows from \cite[Theorem 2.2(ii)]{KZ12} and the bi-Lipschitz equivalence of $d$ and $d_{\rm{int}}$ in Proposition \ref{p:hk2cons}(ii).
\end{proof}

For an $N$-tuple $f=(f_1,\ldots,f_N) \in (\sF_{\on{loc}})^N$, we define $N\times N$-positive semi-definite matrix valued function $M_f : X \to \bR^{N \times N}$ that is well-defined $m$-almost everywhere as 
\begin{equation}\label{e:defmat}
	  M_f(x)= \left( \frac{d \Gamma(f_i,f_j)}{dm}(x)\right)_{1 \le i,j \le N}.
\end{equation}
The proof of equality between pointwise index and pointwise dimension of Cheeger's measurable differentiable structure can be divided into a matching lower and upper bounds on the pointwise index. We begin with the \emph{lower} bound on the pointwise index in the Proposition below. 
\begin{prop} \label{p:lbindex}
	Let $(X,d,m,\sE,\sF)$ be an  MMD space that satisfies Gaussian heat kernel estimates 	\hyperlink{hke}{$\on{HKE}(2)$}. 
	Let $(U,\phi)$ be a chart of dimension $N \in \bN$ on $(X,d,m)$. Then 
	$p_H(x) \ge N$ for $m$-almost every $x \in U$.
\end{prop}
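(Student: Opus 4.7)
The plan is to take the components of the chart map itself as the test functions. Specifically, set $f_i := \phi_i$ for $i = 1,\ldots,N$; since $\phi \in \Lip(X)^N$, Proposition \ref{p:hk2cons}(iii) gives $\phi_i \in \sF_{\on{loc}}$, so Lemma \ref{l:locrank} applies and it suffices to prove that the matrix $M_\phi(x)$ defined in \eqref{e:defmat} has rank $N$ at $m$-almost every $x \in U$.

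The geometric content is extracted from the chart condition by applying differentiability to the zero function. Differentiability with respect to $\phi$ at $x_0$ requires a \emph{unique} $a \in \bR^N$ with $\Lip(f - a \cdot \phi)(x_0) = 0$. Taking $f \equiv 0$, the choice $a = 0$ trivially works, so uniqueness forces $\Lip(\lambda \cdot \phi)(x_0) > 0$ for every $\lambda \in \bR^N \setminus \{0\}$. Hence the chart hypothesis yields $U \subset \Ind(\phi)$ modulo an $m$-null set, i.e.\ $\phi$ is infinitesimally independent at $m$-almost every $x_0 \in U$.

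Next I would convert infinitesimal independence into rank $N$ of $M_\phi$ via Lemma \ref{l:lipem}. Fix a countable dense subset $Q \subset \bR^N$ (e.g.\ $\bQ^N$). For each $\lambda \in Q$, apply Lemma \ref{l:lipem} to the Lipschitz function $\lambda \cdot \phi$; taking the union of the countably many exceptional sets yields a single $m$-null set $N^*$ such that
\[
\Lip(\lambda \cdot \phi)(x)^2 \le C^2 \, \lambda^{T} M_\phi(x) \lambda \quad \text{for all } \lambda \in Q \text{ and } x \in X \setminus N^*,
\]
where the equality $d\Gamma(\lambda\cdot\phi,\lambda\cdot\phi)/dm = \lambda^T M_\phi \lambda$ follows by bilinearity of $\Gamma$. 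If $M_\phi(x_0)$ had rank less than $N$ at some $x_0 \in U \setminus N^*$, I could pick $\lambda^* \in \bR^N \setminus \{0\}$ in the kernel of the positive semidefinite form and approximate it by $\lambda_n \in Q$; the inequality gives $\Lip(\lambda_n\cdot\phi)(x_0) \to 0$, while sublinearity of $\Lip$ combined with $\Lip((\lambda^* - \lambda_n)\cdot\phi)(x_0) \le \|\lambda^* - \lambda_n\| \cdot \on{Lip}(\phi)$ yields $\Lip(\lambda^* \cdot \phi)(x_0) = 0$, contradicting infinitesimal independence at $x_0$. Thus $M_\phi(x_0)$ has rank $N$ at $m$-a.e.\ $x_0 \in U$, and Lemma \ref{l:locrank} closes the argument.

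The main technical wrinkle is the null-set bookkeeping: Lemma \ref{l:lipem} produces an exceptional set depending on the function, so we cannot directly quantify over all $\lambda \in \bR^N$. The key trick is that only a dense countable $Q$ is needed because $\lambda \mapsto \lambda^T M_\phi(x)\lambda$ is jointly continuous in $\lambda$ and $\lambda \mapsto \Lip(\lambda\cdot\phi)(x)$ is sublinear (in fact Lipschitz in $\lambda$ with constant $\on{Lip}(\phi)$), so the kernel information transfers from $Q$ to all of $\bR^N$. Everything else — the reduction from $\sF$ to $\sF_{\on{loc}}$ via Lemma \ref{l:locrank}, and the passage from "chart" to "infinitesimally independent" via the $f\equiv 0$ trick — is essentially formal once Lemma \ref{l:lipem} is in hand.
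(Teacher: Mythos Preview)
Your proposal is correct and follows essentially the same route as the paper's proof: both take the chart components $\phi_i$ as test functions, use the $f\equiv 0$ trick to extract infinitesimal independence from uniqueness of the derivative, invoke Lemma \ref{l:lipem} over a countable dense set of directions to control the null sets, and then pass to all of $\bR^N$ by continuity/sublinearity of $\lambda\mapsto\Lip(\lambda\cdot\phi)(x)$ to conclude that $M_\phi$ has full rank $m$-a.e.\ on $U$. The only cosmetic differences are that the paper takes its dense set in $\bS^{N-1}$ rather than in $\bR^N$ and argues directly rather than by contradiction, and it cites \cite[Sublemma~7.2.4]{Kei} for the continuity that you derive from sublinearity.
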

\begin{proof}
	Let $(\phi_1,\ldots,\phi_N) = \phi$ denote the components of the chart.
Let $M_\phi:X \to \bR^{N \times N}$ be as defined by \eqref{e:defmat}.	By Lemma \ref{l:locrank}, it suffices to show that 
	\begin{equation}\label{e:lb1}
	\on{rank}(M_\phi(x))=N, \quad \mbox{for $m$-almost every $x \in U$.}
	\end{equation}

	Let $\Lambda$ be a countable dense subset of the unit sphere $\bS^{N-1}$ in $\bR^N$. We claim that 
	\begin{equation} \label{e:lb2}
		\inf_{\lambda \in \Lambda} \Lip(\lambda\cdot \phi)(x) >0 \quad \mbox{for $m$-almost every $x \in U$.}
	\end{equation}
	In order to prove \eqref{e:lb2}, consider the function $g \equiv 0$  that is identically zero. Since $g$ is differentiable with respect to $\phi$ at $m$-almost every point of $U$, there exists a measurable set $V$ such that $m(U \setminus V)=0$ such that $g$ is differentiable with respect to $\phi$ at all $x \in V$ with derivative $\partial_\phi g(x)=0$ for all $x \in V$. The \emph{uniqueness} of the derivative implies that 
	\begin{equation} \label{e:lb2a}
	\Lip(\lambda\cdot\phi)(x)=\limsup_{y \to x} \frac{\abs{\lambda\cdot\phi(y)- \lambda\cdot\phi(x)}}{d(y,x)}\neq 0, \quad \mbox{for all $x \in V, \lambda \in \Lambda$.}
	\end{equation}
	By the continuity of $\lambda \mapsto \Lip(\lambda\cdot\phi)(x)$on $\bR^N$  for each $x \in X$ (see \cite[Sublemma 7.2.4]{Kei}) and since $\overline{\Lambda}=\bS^{N-1}$ is compact, we have 
	\[
	\inf_{\lambda \in \Lambda} \Lip(\lambda\cdot \phi)(x)= \min_{\lambda \in \bS^{N-1}}\Lip(\lambda\cdot \phi)(x) \stackrel{\eqref{e:lb2a}}{>}0 \quad \mbox{for all $x \in V$.}
		\]
	This concludes the proof of \eqref{e:lb2}.
	
	By \eqref{e:lb2} and Lemma \ref{l:lipem} (recalling that $\Lambda$ is  countable), we have 
	\begin{equation} \label{e:lb3}
		\inf_{\lambda \in \Lambda} \frac{d \Gamma(\lambda\cdot \phi,\lambda\cdot \phi)}{dm}(x) = \inf_{\lambda \in \Lambda} \lambda \cdot (M_\phi(x)\lambda) >0 \quad \mbox{for $m$-almost every $x \in U$.}
	\end{equation}
	By \eqref{e:lb3} and the fact that $M_\phi(\cdot)$ is symmetric, non-negative definite valued matrix, we conclude \eqref{e:lb1}.
\end{proof}
In the following proposition, we establish an upper bound of the pointwise index matching the lower bound obtained in Proposition \ref{p:lbindex}.
\begin{prop} \label{p:ubindex}
	Let $(X,d,m,\sE,\sF)$ be an  MMD space that satisfies Gaussian heat kernel estimates 	\hyperlink{hke}{$\on{HKE}(2)$}. 
	Let $(U,\phi)$ be a chart of dimension $N \in \bN$ on $(X,d,m)$. Then 
	$p_H(x) \le N$ for $m$-almost every $x \in U$.
\end{prop}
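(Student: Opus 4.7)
The plan is to verify the rank bound $\on{rank}(d\Gamma(f_i,f_j)/dm(x))_{1\le i,j\le k}\le N$ for $m$-a.e.\ $x \in U$, for every $k \in \bN$ and every tuple $f_1,\ldots,f_k \in \sF$; by the minimality in Definition \ref{d:index} together with Proposition \ref{p:main}(i), this implies $p_H(x)\le N$ for $m$-a.e.\ $x \in U$. Thanks to the $\sE_1$-density of $\Lip(X) \cap C_c(X)$ in $\sF$ from Proposition \ref{p:hk2cons}(iii), the $\sF$-bound will follow from its Lipschitz counterpart by an approximation at the end.

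The core step for $f_1,\ldots,f_k \in \Lip(X) \cap C_c(X)$ is a pointwise linearization against the chart. At $m$-a.e.\ $x_0 \in U$, each $f_i$ is differentiable with respect to $\phi=(\phi_1,\ldots,\phi_N)$, producing a derivative $a_i(x_0)=(a_i^{(1)}(x_0),\ldots,a_i^{(N)}(x_0)) \in \bR^N$, so that $g_i := f_i - \sum_\ell a_i^{(\ell)}(x_0)\phi_\ell$ satisfies $\Lip g_i(x_0)=0$. Since each $g_i \in \Lip(X) \subset \sF_{\on{loc}}$ by Proposition \ref{p:hk2cons}(iii), Lemma \ref{l:lipem} yields $d\Gamma(g_i,g_i)/dm(x_0)=0$. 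The Cauchy-Schwarz inequality $\lvert\Gamma(g_i,h)\rvert(A) \le \Gamma(g_i,g_i)(A)^{1/2}\,\Gamma(h,h)(A)^{1/2}$ (which holds on any Borel $A$ because $(u,v)\mapsto\Gamma(u,v)(A)$ is a positive semi-definite bilinear form), together with Lebesgue differentiation on the doubling space $(X,d,m)$, upgrades this to $d\Gamma(g_i,h)/dm(x_0)=0$ for every $h \in \sF_{\on{loc}}$. Expanding $\Gamma(f_i,f_j) = \Gamma(g_i + a_i\cdot\phi,\,g_j + a_j\cdot\phi)$ bilinearly and dropping the vanishing cross terms gives
\[
\frac{d\Gamma(f_i,f_j)}{dm}(x_0) \;=\; \sum_{\ell,\ell'=1}^{N} a_i^{(\ell)}(x_0)\,a_j^{(\ell')}(x_0)\,\frac{d\Gamma(\phi_\ell,\phi_{\ell'})}{dm}(x_0),
\]
i.e.\ $M_f(x_0) = A(x_0)\,M_\phi(x_0)\,A(x_0)^T$ with $A(x_0)$ the $k\times N$ matrix $(a_i^{(\ell)}(x_0))$, whence $\on{rank}(M_f(x_0)) \le N$.

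To handle general $f_i \in \sF$, I approximate by $f_i^{(n)} \in \Lip(X)\cap C_c(X)$ with $f_i^{(n)} \to f_i$ in $(\sF,\sE_1)$. Cauchy-Schwarz for energy measures together with the $\sE$-boundedness of $\{f_i^{(n)}\}$ forces $\lvert\Gamma(f_i^{(n)},f_j^{(n)}) - \Gamma(f_i,f_j)\rvert(X) \to 0$, so the Radon-Nikodym densities converge in $L^1(m)$; a diagonal subsequence then yields simultaneous $m$-a.e.\ convergence of every entry of $M_{f^{(n)}}$ to the corresponding entry of $M_f$. Since the set of symmetric $k\times k$ matrices of rank at most $N$ is closed (cut out by the vanishing of all $(N+1)\times(N+1)$ minors), the rank bound passes to the limit on $U$. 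I expect the main technical hurdle to be the coordination of this approximation, i.e.\ ensuring simultaneous $m$-a.e.\ convergence of all entries and controlling the $m$-null exceptional sets along the subsequence; the Lipschitz core, by contrast, is a clean consequence of Lemma \ref{l:lipem}, Cauchy-Schwarz for energy measures, and the uniqueness of the derivative built into the chart $(U,\phi)$.
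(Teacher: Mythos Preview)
Your factorization $M_f(x_0)=A(x_0)M_\phi(x_0)A(x_0)^T$ is a more direct route than the paper's: the paper argues by contradiction, producing $N+1$ Lipschitz functions that are infinitesimally independent on a subset of $U$ of positive measure and then invoking Keith's chart-extension machinery (\cite[Sublemmas~7.3.5--7.3.8]{Kei}) to build a chart of dimension $\ge N+1$ overlapping $U$, contradicting uniqueness of chart dimension. Your argument avoids that construction and yields an explicit factorization, but it has a gap at the point where you invoke Lemma~\ref{l:lipem}.

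The function $g_i=f_i-a_i(x_0)\cdot\phi$ depends on the base point $x_0$, so Lemma~\ref{l:lipem} cannot be applied directly at that single point: for a \emph{fixed} $g\in\Lip(X)$ the lemma gives $\Lip g\asymp\sqrt{d\Gamma(g,g)/dm}$ only $m$-a.e., with an exceptional null set depending on $g$. As $x_0$ ranges over $U$ you are invoking the lemma for uncountably many functions, each at one specific point, and there is no control on the union of the exceptional sets. The same issue afflicts your Cauchy--Schwarz step ``for every $h\in\sF_{\loc}$''. The fix is the countable-dense-set trick used in Propositions~\ref{p:lbindex} and~\ref{p:ubindex}: fix Borel versions of $d\Gamma(u,v)/dm$ for $u,v\in\{f_1,\ldots,f_k,\phi_1,\ldots,\phi_N\}$, so that $a\mapsto d\Gamma(f_i-a\cdot\phi,f_i-a\cdot\phi)/dm(x)$ is a quadratic form in $a$ with fixed coefficients; apply Lemma~\ref{l:lipem} to $f_i-a\cdot\phi$ for each $a$ in a countable dense subset of $\bR^N$, take the union of null sets, and extend to all $a\in\bR^N$ using continuity of $a\mapsto\Lip(f_i-a\cdot\phi)(x)$ (\cite[Sublemma~7.2.4]{Kei}) together with continuity of the quadratic form. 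Then at $m$-a.e.\ $x_0\in U$ you may substitute $a=a_i(x_0)$ to obtain $d\Gamma(g_i,g_i)/dm(x_0)=0$, and positive semidefiniteness of the finite matrix $(d\Gamma(u,v)/dm(x_0))_{u,v}$ (which holds $m$-a.e.) gives the Cauchy--Schwarz bound needed to kill the cross terms with $h\in\{f_j,\phi_\ell\}$, which is all you actually need for the factorization. With this patch the Lipschitz core is correct, and your approximation step for general $f_i\in\sF$ is fine as written.
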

\begin{proof}
	Assume to the contrary that  	$p_H(x) \ge  N+1$ for $m$-almost every $x \in V_0$ where $V_0 \subset U$ and $m(V_0) >0$. By \cite[Lemma 2.5(ii)]{Hin10} and the density of $\Lip(X) \cap C_c(X)$ in the Hilbert space $(\sF,\sE_1)$ (cf. \cite[Remark 4.6]{KM20}), there exists $f_1,\ldots,f_{N+1} \in \Lip(X) \cap C_c(X) \subset \sF$ and $V_1 \subset V_0$ with $m(V_1)>0$ such that $f=(f_1,\ldots,f_{N+1})$ satisfies
	\begin{equation} \label{e:ub1}
\on{rank} (M_f(x)) = N+1 \quad \mbox{for $m$-almost every $x \in V_1$,}
	\end{equation}
	where $M_f$ is as defined in \eqref{e:defmat}. 
	Let $\Lambda \subset \bS^N \subset \bR^{N+1}$ be a countable dense subset of the unit sphere $\bS^N$.
	By \eqref{e:ub1} and the fact that $M_f$ is a positive semi-definite matrix valued function, there exists $\delta>0$ and a measurable set $V_2 \subset V_1$ such that $m(V_2)>0$
	and 
	\begin{equation} \label{e:ub2}
		\inf_{\lambda \in \Lambda} \lambda\cdot \left(M_f(x) \lambda\right) \ge \delta \quad \mbox{for $m$-almost every $x \in V_2$.}
	\end{equation}
	By Lemma \ref{l:lipem}, there exists $c>0$, $V_3 \subset V_2$ such that $m(V_2 \setminus V_3)=0$
	\[
	\frac{d\Gamma(\lambda\cdot f,\lambda\cdot f)}{dm}(x) = 	\lambda \cdot \left(M_f(x)\lambda\right) \quad \mbox{for all $x \in V_3, \lambda \in \Lambda$,}
	\]
	 and
	\begin{equation} \label{e:ub3}
		\Lip(\lambda\cdot f)(x) \ge c \sqrt{\delta} \quad \mbox{for all $x \in V_3, \lambda \in \Lambda$.}
	\end{equation}
	Since for each $x \in X$, the function $\lambda \mapsto \Lip(\lambda\cdot f)(x)$ is continuous on $\bR^{N+1}$ by \cite[Sublemma 7.2.4]{Kei}, we can improve \eqref{e:ub3} to 
	\begin{equation*}
	\inf_{\lambda \in \bS^{N}}	\Lip(\lambda\cdot f)(x) \ge c   \sqrt{\delta}  \quad \mbox{for all $x \in V_3$.}
	\end{equation*}
	Equivalently, this means that there exists $V_3 \subset U$ with $m(V_3)>0$ with
	\begin{equation} \label{e:ub4}
		V_3 \subset \Ind(f). 
	\end{equation}
	By the argument in \cite[p. 311]{Kei} using \cite[Sublemma 7.3.5]{Kei}
	there exists $K \in \bN$ such that is  the largest number with the following property: $K \ge N+1$ and there exists $g=(g_1,\ldots,g_K)$ such that $g_i \in \Lip(X)$ for all $i=1,\ldots,K$ and $g_i=f_i$ for all $i=1,\ldots,N+1$ and
	\begin{equation} \label{e:ub5}
	 	m\left(\Ind(g) \cap V_3\right) >0.
	\end{equation}
 We recall that $\Ind(g)$ is measurable by \cite[Lemma 7.2.3]{Kei}. Setting $W=\Ind(g) \cap V_3$ and following the same argument as \cite[Proofs of Sublemmas 7.3.6, 7.3.7 and 7.3.8]{Kei}, we conclude that $(W,g)$ is a chart.
 Since $(U,\phi)$ is a chart of dimension $N$ and $(W,g)$ is a chart of dimension $K$ with $m(U \cap W)=m(W)>0$, by \cite[p. 458]{Che} we have $N=K \ge N+1$, which implies the desired contradiction.
\end{proof}
We can now complete the proof of our main result.
\begin{proof}[Proof of Theorem \ref{t:main}]
	The equality \eqref{e:eqptwise} follows immediately from the matching lower and upper bounds in Propositions \ref{p:lbindex} and \ref{p:ubindex}. The equality between index of the MMD space and analytic dimensions follow from (ii) and \eqref{e:eqptwise}. By Hino's theorem on the equality between the index of an MMD space and the martingale dimension of the associated diffusion process (cf. \cite[Theorem 3.4]{Hin10}), we obtain the desired conclusion.
\end{proof}
\subsection{Examples} \label{ss:examples}
For Brownian motion on Euclidean space the martingale dimension coincides with the Hausdorff and topological dimensions of the underlying space. In general, all these three dimensions may be different as shown in the example below.
\begin{example}[Horizontal Brownian motion on  Heisenberg group]
	We consider the $3$-dimensional Heisenberg group $\bH=\{(x,y,z): x,y,z \in \bR\}$ equipped with the group operation 
	\[
	(x_1,y_1,z_1) \odot (x_2,y_2,z_2)= (x_1+x_2, y_1+y_2, z_1 + z_2 + x_1y_2-x_2y_1).
	\]
	The Lebesgue measure $m$ on $\bR^3$ is the (left and right) Haar measure. The following left-invariant vector fields forms a basis of the Lie algebra:
	\[
	\bX= \partial_x - y \partial_z, \quad \bY= \partial_y + x \partial_z, \quad \bZ= \partial_z.
	\]
	The sub-elliptic Laplacian
	\[
	L = \frac{1}{2}(\bX^2+\bY^2)= \frac{1}{2} \left(\partial^2_{xx}+ \partial^2_{yy}+2 x\partial^2_{yz}- 2 y \partial^2_{xz} +  (x^2+y^2) \partial^2_{zz}\right)
	\]
	is the generator of a diffusion process that is closely related to the  \emph{L\'evy area} $S(t)$ of the two-dimensional Brownian motion $(B_1(t),B_2(t))$, where 
	\[
	S(t)= \int_0^t\left( B_1(s)\,dB_2(s) - B_2(s)\,dB_1(s) \right).
	\]
	Then $(B_1(t),B_2(t),S(t))$ is the Markov process generated by $L$ and can be viewed as a Brownian motion on $\bH$ (see \cite[\textsection 2.1.2]{BDW}). To be precise, the operator $\frac{1}{2}(\bX^2+\bY^2)= \frac{1}{2} \left(\partial^2_{xx}+ \partial^2_{yy}+2 x\partial^2_{xy}- 2 y \partial^2_{xy} +  (x^2+y^2) \partial^2_{zz}\right)$ on $C_c^\infty(\bH)$ is an essentially self-adjoint operator as outlined in \cite[p. 950]{DGS} and $L$ is defined to the unique   self-adjoint extension. By \cite[ Theorem 1.3.1]{FOT}, the generator $L$ defines a Dirichlet form $(\sE,\sF)$ on $L^2(\bH,m)$. It is regular as   $C_c^\infty(\bH)$ is a core.
	Let $d$ denote the intrinsic metric which also turns out to be the Carnot-Carath\'eodory metric. The MMD space $(\bH,d,m,\sE,\sF)$ satisfies Gaussian heat kernel bounds \cite[Th\'eor\`eme 1.1]{Li}.
	
	It is known that the $(\bH,d)$ is homeomorphic to the $\bR^3$ with respect to the Euclidean metric and hence the \emph{topological dimension of $(\bH,d)$ is 3}. 
	However the measure $m$ is $4$-Ahlfors regular on $(\bH,d)$ and hence the \emph{Hausdorff dimension of $(\bH,d)$ is 4}. The \emph{analytic dimension} (and hence the \emph{martingale dimension}) is 2. This follows from the fact that a martingale additive function with respect to the associated diffusion can be viewed as that associated with the 2-dimensional Brownian motion. Alternately, this follows from a version of Radamacher's theorem due to Pansu \cite{Pan}. As pointed out in \cite{KM16}, $(\bH,d,m)$ carries a measurable differentiable structure with a single chart $(\bH,\phi)$, where $\phi:\bH \to \bR^2$ is defined by $\phi(x,y,z)=(x,y)$.	
\end{example}

The next example  illustrates that every possible joint values of martingale and Hausdorff dimensions satisfying the inequality in the conclusion of Corollary \ref{c:finitedim} is possible.
\begin{example}[Laakso-type spaces and product with Euclidean spaces] \label{x:ltype}
	Let $d_m \in \bN, d_H \in  [1,\infty)$ be such that $d_m \le d_H$. We outline a construction of an MMD space with martingale dimension $d_m$ and Hausdorff dimension $d_H$.
	If $d_m=1$, then this follows from \cite[Theorem 5.4, Lemma 5.6]{Mur24+} as there exists a Laakso-type metric measure space $(\sL,d_{\sL},m_{\sL})$
	admitting a Dirichlet form that satisfies Gaussian heat kernel bounds and such that $m_{\sL}$ is $d_{H}$-Ahlfors regular and hence the Hausdorff dimension is $d_H$. By \cite[ Proposition 5.17]{Mur24+}, the martingale dimension is $1$.
	If $d_m \ge 2$, we consider the product space $\sL \times \bR^{d_m-1}$, where $\sL$ is a $d_H-(d_m-1)$-Ahlfors regular Laakso-type space that satisfies Gaussian heat kernel estimate as explained above. The Euclidean space $\bR^{d_m-1}$ is equipped with the Euclidean metric and Lebesgue measure. Then we consider the diffusion on the product space such that the projections to the components are independent diffusions on the Laakso-type space and the standard Brownian motion on the Euclidean space. The metric and measure on the product space are taken to be product measure. One can verify that the corresponding MMD space satisfies Gaussian heat kernel estimates, has the desired martingale dimension $d_m$ and the Hausdorff dimension $d_H$. 
\end{example}

\subsection{Related questions} \label{ss:questions}
 Many basic questions concerning martingale dimension still remain open. For instance, whether or not the martingale dimension is finite is not known in many situations. We conjecture the following finiteness of martingale dimension referring the reader to \cite[Definition 2.3]{Mur24+} for the definition of sub-Gaussian heat kernel estimates (such estimates allow for a more general space-time scaling and is a generalization of Gaussian bounds).
\begin{conjecture} \label{c:dmfinite}
If an MMD space $(X,d,m,\sE,\sF)$ that satisfies sub-Gaussian heat kernel estimates where $m$ is a doubling measure, then the martingale dimension is finite.
\end{conjecture}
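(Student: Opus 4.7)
The plan is to extend the strategy of Theorem~\ref{t:main} to the sub-Gaussian setting by replacing the Gaussian Koskela--Zhou comparison (Lemma~\ref{l:lipem}) with a time-scaled analog adapted to the walk dimension $d_w \ge 2$. Under sub-Gaussian heat kernel estimates with $m$ doubling, well-known characterizations (Grigor'yan--Hu, Andres--Barlow, Kumagai) give a scaled Poincar\'e inequality
\[
\int_{B(x,r)}(f-f_{B(x,r)})^2\, dm \le C r^{d_w}\int_{B(x,Kr)}d\Gamma(f,f) \quad \text{for all } f\in\sF,
\]
together with a cutoff Sobolev/energy inequality. Since the martingale dimension (and hence the Hino index) depends only on the Dirichlet form, one can attempt to work directly with these analytic ingredients, without the bi-Lipschitz equivalence of $d$ and $d_{\on{int}}$ that was used in the Gaussian case.

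The first step is a pointwise comparison
\[
C^{-1}\,\mathrm{Lip}^{(d_w)} f(x) \le \sqrt{\tfrac{d\Gamma(f,f)}{dm}(x)} \le C\, \mathrm{Lip}^{(d_w)} f(x) \quad \text{for $m$-a.e.\ } x\in X,
\]
where $\mathrm{Lip}^{(d_w)} f(x):=\limsup_{r\downarrow 0} r^{-d_w/2}\bigl(\fint_{B(x,r)}|f-f_{B(x,r)}|^2\,dm\bigr)^{1/2}$, valid for a suitable dense class of $f\in\sF_{\loc}$. The upper bound should follow from the scaled Poincar\'e inequality combined with a Whitney-type telescoping argument, while the lower bound should follow from a reverse Poincar\'e (Caccioppoli) inequality that is a standard consequence of the parabolic Harnack inequality implied by sub-Gaussian heat kernel bounds.

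The second step is to construct a Cheeger-type measurable differentiable structure on $(X,d,m)$ using $\mathrm{Lip}^{(d_w)}$ in place of the classical pointwise Lipschitz constant, and to obtain a quantitative bound on the dimension of any chart depending only on doubling and the Poincar\'e constants. One would adapt the arguments of Cheeger (as presented by Keith) with the scaled Poincar\'e inequality in the role of the classical one. Once such a bounded differentiable structure is in place, the matching lower/upper bound arguments of Propositions~\ref{p:lbindex} and~\ref{p:ubindex} should carry over to identify the pointwise index $p_H$ with the pointwise chart dimension, yielding finiteness of the martingale dimension via \cite[Theorem~3.4]{Hin10}.

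The main obstacle is the quantitative Cheeger theorem for $\mathrm{Lip}^{(d_w)}$ with $d_w>2$: the Cheeger--Keith chart-extraction procedure relies on a $(1,p)$-Poincar\'e inequality formulated with respect to the classical pointwise Lipschitz constant, which generally fails in the sub-Gaussian regime (e.g.\ on the Sierpi\'nski gasket, bounded-energy functions are not Lipschitz with respect to $d$). A plausible way to bypass this obstacle is to argue entirely within Hino's functional framework and directly bound the rank of the Gram matrix $\bigl(d\Gamma(f_i,f_j)/dm\bigr)_{1\le i,j\le N}$ at $m$-a.e.\ point, using the scaled Poincar\'e inequality to show that too large a rank at a single point would force an amount of energy on a small ball incompatible with the doubling of $m$.
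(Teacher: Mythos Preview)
The statement you are attempting to prove is recorded in the paper as an \emph{open conjecture} (Conjecture~\ref{c:dmfinite}); the paper offers no proof, so there is nothing to compare your argument against. What remains is to assess whether your outline would close the gap, and it does not, for a reason that is structural rather than technical.

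Your entire strategy is built on the Radon--Nikodym derivative $d\Gamma(f,f)/dm$ and on a pointwise comparison between it and a scaled oscillation $\mathrm{Lip}^{(d_w)} f$. But in the genuinely sub-Gaussian regime $d_w>2$, the result of Kajino--Murugan \cite{KM20} (which this paper invokes precisely to obtain Proposition~\ref{p:main}(i) in the Gaussian case) goes the other way: under sub-Gaussian heat kernel bounds with walk dimension strictly larger than $2$, the energy measures $\Gamma(f,f)$ are \emph{singular} with respect to $m$. Consequently $d\Gamma(f,f)/dm=0$ $m$-a.e.\ for every $f\in\sF$, the proposed two-sided inequality collapses, and the Gram matrix $\bigl(d\Gamma(f_i,f_j)/dm\bigr)$ carries no information about the index. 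The pointwise index in Definition~\ref{d:index} is defined relative to a minimal energy-dominant measure $\nu$, and in this setting $\nu\perp m$; any bound on ranks must therefore be established $\nu$-a.e., where the doubling of $m$, the scaled Poincar\'e inequality in the $m$-averaged form you wrote, and the Lebesgue differentiation with respect to $m$ give you no direct leverage.

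This is also why your fallback suggestion in the last paragraph---bounding the rank by arguing that ``too large a rank at a single point would force an amount of energy on a small ball incompatible with the doubling of $m$''---does not go through: the energy lives on an $m$-null set, so there is no tension with the volume growth of $m$. The obstacle you flagged (absence of a Cheeger--Keith chart theory for $\mathrm{Lip}^{(d_w)}$) is real, but it is downstream of the more basic failure that $m$ is not energy-dominant when $d_w>2$. Any successful approach to Conjecture~\ref{c:dmfinite} will have to work intrinsically with a minimal energy-dominant measure $\nu$ and confront the fact that essentially nothing is known about its regularity (doubling, Poincar\'e, etc.) in the sub-Gaussian setting.
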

 Our main result (see Remark \ref{r:main}(b)) along with Hino's bound in \cite[Theorem 3.5]{Hin13} provides  evidence towards the following quantitative version of  Conjecture \ref{c:dmfinite}. We refer the reader to \cite{Bar} or \cite{Mur24+} for the definitions of volume growth exponent and walk dimension used in the following conjecture.
\begin{conjecture} \label{c:dsdm}
	If an MMD space $(X,d,m,\sE,\sF)$ satisfies sub-Gaussian heat kernel estimates with volume growth exponent $\alpha$ and walk dimension $\beta$, then the martingale dimension $d_m$ satisfies $d_m \le \frac{2\alpha}{\beta}$.
\end{conjecture}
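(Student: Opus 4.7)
The plan is to mimic the strategy of Theorem \ref{t:main}, replacing the Lipschitz differential calculus by one tailored to the sub-Gaussian space-time scaling $r \leftrightarrow t^{1/\beta}$. Concretely, I would introduce a $(\beta/2)$-fractional pointwise Lipschitz constant
\[
\Lip_{\beta/2} f(x) := \limsup_{y \to x} \frac{\abs{f(y)-f(x)}}{d(x,y)^{\beta/2}},
\]
and aim to prove the sub-Gaussian analog of Lemma \ref{l:lipem}: there exists $C \ge 1$ such that for every $f$ in a dense subclass of $\sF$ (the natural candidate being $B^{\beta/2}_{2,\infty}(X,d,m) \cap C_{c}(X)$, which coincides with $\sF$ under sub-Gaussian heat kernel estimates of order $\beta$ by results of Pietruska-Pa\l uba, Grigor'yan-Hu-Lau and Kumagai-Sturm),
\[
C^{-1}\sqrt{\tfrac{d\Gamma(f,f)}{dm}(x)} \le \Lip_{\beta/2} f(x) \le C\sqrt{\tfrac{d\Gamma(f,f)}{dm}(x)} \quad \text{for $m$-a.e.\ $x \in X$.}
\]

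With such a two-sided estimate in hand, the theory of Section \ref{ss:mds} would be redeveloped with $(\beta/2)$-H\"older maps $\phi: X \to \bR^{N}$ in place of Lipschitz maps and $\Lip_{\beta/2}$ in place of $\Lip$, yielding a notion of \emph{$\beta$-chart} and an associated pointwise $\beta$-chart dimension. The arguments of Propositions \ref{p:lbindex} and \ref{p:ubindex} transfer almost verbatim to show that Hino's pointwise index $p_{H}(x)$ equals this pointwise $\beta$-chart dimension $m$-a.e., so by Hino's theorem the martingale dimension equals the $m$-essential supremum of the latter. The remaining step is a Frostman-type volume bound: if $(U,\phi)$ is a $\beta$-chart of dimension $N$ with $\phi$ infinitesimally non-degenerate on a positive-measure subset, then the $(\beta/2)$-H\"older regularity of $\phi$ forces $\gtrsim r^{-\beta N/2}$ essentially disjoint preimages inside $B(x,r)$, and comparison with the volume growth $m(B(x,r)) \lesssim r^{\alpha}$ coming from the sub-Gaussian estimate yields $N \le 2\alpha/\beta$. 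This is the sub-Gaussian counterpart of the bound analytic-dim $\le$ Hausdorff-dim that underlies Corollary \ref{c:finitedim}.

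The decisive obstacle is Step~1, the sub-Gaussian analog of the Koskela-Zhou estimate. In the Gaussian case the pointwise lower bound on $\Lip f$ is extracted by differentiating an $L^{2}$-Besov quantity along balls of radius $\sqrt{t}$ with the aid of the Gaussian off-diagonal decay of the heat semigroup. For $\beta > 2$ the correct scale is $t^{1/\beta}$, but typical functions in $\sF$ are only Besov-regular in an $L^{2}$ sense, and converting this into an $m$-a.e. two-sided bound involving a genuine pointwise difference quotient seems to require either a new Lebesgue-type differentiation theorem for Besov classes on doubling spaces carrying a sub-Gaussian heat kernel, or an alternative route via a Kigami-style singular time change converting sub-Gaussian heat kernel bounds into Gaussian ones, in which case the precise transformation rule for energy measures and the pointwise index under the time change would have to be worked out and combined with Theorem \ref{t:main}. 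A secondary (potentially critical-exponent) issue is ensuring that there are enough nonconstant $(\beta/2)$-H\"older functions in $\sF_{\loc}$ to run Cheeger's selection procedure: this is suggested by Hino's self-similar bound \cite{Hin13} recalled in Remark \ref{r:main}(b), but a general proof under sub-Gaussian heat kernel estimates alone appears to be missing.
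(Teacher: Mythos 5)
There is nothing to compare your proposal against: the statement you were asked about is Conjecture \ref{c:dsdm}, which the paper explicitly poses as an \emph{open problem} and does not prove. The paper only offers evidence for it (the Gaussian case $\beta=2$ via Theorem \ref{t:main} and Remark \ref{r:main}(b), and Hino's self-similar bound in \cite{Hin13}). Your text is likewise not a proof but a research program, and you candidly identify its main missing ingredient yourself, so at best it should be presented as a discussion of possible strategies, not as a proof.

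Beyond the gaps you acknowledge, there is one obstruction you have not flagged and which I think is fatal to Step 1 as written. Your proposed sub-Gaussian analogue of Lemma \ref{l:lipem} is a two-sided pointwise comparison between $\Lip_{\beta/2}f$ and $\sqrt{d\Gamma(f,f)/dm}$. But in the genuinely sub-Gaussian regime $\beta>2$, the results of Kajino and Murugan \cite{KM20} (the very paper invoked in Proposition \ref{p:main}(i) for the Gaussian case) show that the energy measures $\Gamma(f,f)$ are \emph{singular} with respect to $m$; in particular $m$ is not an energy-dominant measure and the density $d\Gamma(f,f)/dm$ vanishes $m$-a.e.\ (only the singular part survives). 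So both sides of your proposed estimate degenerate, and the entire bridge between the Dirichlet form and the pointwise H\"older-type calculus collapses exactly where the conjecture is new. Any workable approach must instead be formulated relative to a minimal energy-dominant measure $\nu\perp m$, at which point the volume-growth input $m(B(x,r))\lesssim r^{\alpha}$ no longer interacts with the differentiation structure in the way your Frostman-type step requires; that step is in any case only asserted, not argued. In short: the statement remains a conjecture, and your outline, while a reasonable first pass at the $\beta=2$-adjacent picture, does not survive contact with the known singularity of energy measures for $\beta>2$.
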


Since the walk dimension $\beta$ is greater than or equal to two, we have the following weaker version of Conjecture \ref{c:dsdm}. 
\begin{conjecture} \label{c:dhdm}
	If an MMD space $(X,d,m,\sE,\sF)$ satisfies sub-Gaussian heat kernel estimates, then the martingale dimension $d_m$ is less than or equal to the Hausdorff dimension of $(X,d)$.
\end{conjecture}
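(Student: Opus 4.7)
The plan is to adapt the strategy of Corollary \ref{c:finitedim} to the sub-Gaussian setting. Under \hyperlink{hke}{$\on{HKE}(2)$}, the bound $d_m \le d_H$ is obtained by chaining three facts: Hino's identity between martingale dimension and index, the identification of the pointwise index with the pointwise Cheeger dimension (Theorem \ref{t:main}), and the known bound $d_C \le d_H$ from \cite[Theorem 5.3]{BKO}. The fundamental bridge is Lemma \ref{l:lipem}, which in turn rests on (a) the inclusion $\Lip(X) \cap C_c(X) \subset \sF$ and (b) the bi-Lipschitz equivalence between $d$ and $d_{\on{int}}$. Both of these typically fail in the sub-Gaussian regime: on fractals the intrinsic metric degenerates (to zero or infinity) and Lipschitz functions are generally not in the domain of $\sE$.

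A natural first attempt is a snowflake-type reduction. Sub-Gaussian heat kernel estimates with walk dimension $\beta$ become formally Gaussian in the scaled quasi-distance $d_\star := d^{\beta/2}$, and the Hausdorff dimension in $d_\star$ equals $2 d_H/\beta \le d_H$; if $d_\star$ were bi-Lipschitz equivalent to an honest metric, the Gaussian theorem would directly yield the sharper Conjecture \ref{c:dsdm}. Unfortunately, for $\beta > 2$ the function $d^{\beta/2}$ fails the triangle inequality and no such metric realization exists in general, so the argument must be carried out intrinsically on $(X,d,m,\sE,\sF)$ rather than by a change of metric.

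The concrete program I would pursue runs as follows. \textbf{Step 1.} Replace $\Lip(X)$ by the class $\sH := \{f \in \sF_{\loc}\cap C(X) : d\Gamma(f,f)/dm \in L^\infty_{\loc}\}$, together with the pointwise $\beta/2$-H\"older modulus $H f(x) := \limsup_{y \to x} \abs{f(y)-f(x)}/d(y,x)^{\beta/2}$. \textbf{Step 2.} Establish the sub-Gaussian analogue of Lemma \ref{l:lipem}: a two-sided comparison $H f(x) \asymp \sqrt{d\Gamma(f,f)/dm\,(x)}$ for $f \in \sH$ and $m$-a.e.\ $x$, presumably via Moser/Nash-type arguments applied to the sub-Gaussian kernel. \textbf{Step 3.} Develop a H\"older version of Cheeger's measurable differentiable structure: charts $(U,\phi)$ with $\phi \in \sH^N$ such that every $f \in \sH$ admits a unique first-order expansion in the $\beta/2$-H\"older sense at $m$-a.e.\ point of $U$, by combining the sub-Gaussian $(1,2)$-Poincar\'e inequality (the analogue of Proposition \ref{p:hk2cons}(iv), which is classical in this setting) with Cheeger's machinery executed in the quasi-metric $d_\star$. \textbf{Step 4.} Rerun the proofs of Propositions \ref{p:lbindex} and \ref{p:ubindex} with $H$ in place of $\Lip$; the Keith-style arguments \cite[Sublemmas 7.2.4, 7.3.5--7.3.8]{Kei} depend only on continuity of the modulus in the linear parameter, which is preserved. \textbf{Step 5.} Conclude by bounding the H\"older-chart dimension: each component of a chart is $\beta/2$-H\"older, so the image of the chart in $\bR^N$ has Hausdorff dimension at most $2 d_H/\beta$, and a Bate--Alberti-representation argument as in \cite[Theorem 6.6]{Bat} forces $N \le 2 d_H/\beta \le d_H$.

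The decisive obstacle is Step 2: producing a two-sided comparison between energy densities and a pointwise H\"older modulus without access to a non-trivial intrinsic metric. The Gaussian proof in \cite{KZ12} exploits in an essential way that the semigroup transports Lipschitz bounds, which translates into $L^\infty$ control on $\sqrt{t}\,\grad P_t f$. Under sub-Gaussian bounds the analogous control concerns the oscillation of $P_t f$ on balls of radius $t^{1/\beta}$; proving such bounds uniformly over a rich enough class of test functions, together with a matching lower bound, is open even on standard model fractals such as the Sierpi\'nski gasket and carpet. A positive answer would plausibly deliver the stronger Conjecture \ref{c:dsdm} as a by-product, since Step 5 already produces the sharper bound $d_m \le 2 d_H/\beta$.
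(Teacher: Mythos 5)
The statement you are trying to prove is not a theorem of the paper: it is stated as Conjecture \ref{c:dhdm} in \textsection \ref{ss:questions}, explicitly left open, and presented only as a weakening of Conjecture \ref{c:dsdm} (using $\beta \ge 2$). The paper proves the corresponding bound only under \emph{Gaussian} estimates (Corollary \ref{c:finitedim}), via Theorem \ref{t:main} together with \cite[Theorem 5.3]{BKO} and \cite[Theorem 6.6]{Bat}. So there is no proof in the paper to compare yours against, and your proposal is not a proof either --- it is a research program, and you say so yourself.

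To be concrete about where the gap lies: your Step 2 (a two-sided $m$-a.e.\ comparison between $d\Gamma(f,f)/dm$ and a pointwise $\beta/2$-H\"older modulus, replacing Lemma \ref{l:lipem}) is exactly the missing ingredient, and it is open even on the Sierpi\'nski gasket and carpet; nothing in the later steps can be executed without it. Your diagnosis of why the Gaussian argument breaks (degeneracy of $d_{\on{int}}$, failure of $\Lip(X)\cap C_c(X)\subset\sF$, failure of the snowflake $d^{\beta/2}$ to be a metric for $\beta>2$) is accurate and consistent with the paper's framing. Two further caveats beyond Step 2: in Step 3 there is no existing H\"older analogue of Cheeger's theorem (Theorem \ref{t:cheeger} requires a genuine metric and Lipschitz functions, and Keith's machinery in \cite{Kei} is likewise metric-based), and in Step 5 the transfer of Bate's Alberti-representation bound \cite[Theorem 6.6]{Bat} to hypothetical H\"older charts is itself unestablished. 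The proposal is a reasonable sketch of an attack, and if completed it would indeed give the stronger Conjecture \ref{c:dsdm}, but as it stands it establishes nothing; the conjecture remains open.
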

One outcome of our work relating martingale  and analytic dimensions is that 
questions concerning one of these notions of dimension have a natural analogue in the other. 
Therefore this provides opportunity for interactions between different areas.
We hope that this connection would help with a better understanding of both these notions and development of techniques that uses ideas from these different viewpoints. For concreteness, we list an analogue of a question due to Kleiner and Schioppa concerning analytic dimensions \cite[Question 1.3]{KS17}. The following question is about the existence of exotic diffusions on $\bR^n$ with $n \ge 2$ with martingale dimension one.
\begin{question} \label{q:ks}
	Let $n \ge 2$. Is there an MMD space $(\bR^n,d,m,\sE,\sF)$ that satisfies Gaussian heat kernel estimates with martingale dimension one such that $(\bR^n,d)$ is homeomorphic to $\bR^n$ with the Euclidean metric? (cf. \cite[Question 1.3]{KS17}).
\end{question}
We do not know the answer to this question  even when $n=2$.   On the other hand, the results of Kleiner and Schioppa suggests that there exist  diffusions with martingale dimension one satisfying Gaussian heat kernel estimates  where the underlying space has arbitrarily high topological dimension \cite[Theorem 1.1]{KS17}. We note that Question \ref{q:ks} remains open if we relax the requirement of Gaussian heat kernel bounds to that of sub-Gaussian heat kernel bounds.

\noindent \textbf{Acknowledgments.} 
I thank Aobo Chen for useful discussions related to this work and Yizhou Wang for remarks on an earlier draft. I am grateful to Pietro Wald for several useful comments and references such as \cite{BKO,CK,Dav21}. 
The author is grateful to the anonymous referee for a careful reading and helpful suggestions.
\\

\noindent Department of Mathematics, University of British Columbia,
Vancouver, BC V6T 1Z2, Canada. \\
mathav@math.ubc.ca 


\begin{thebibliography}{}
	
		\bibitem[Aro]{Aro} D.~G.~Aronson, Bounds for the fundamental solution of a parabolic equation, {\em Bull. Amer.
		Math. Soc.} {\bf 73} 1967 890--896.
	
		\bibitem[Bar]{Bar} M. T. Barlow. 
	Diffusions on fractals,
	{\em Lecture Notes in Math.} {\bf 1690}, 1--121, Springer, Berlin, 1998.
	
		\bibitem[BP]{BP} M.~T.~Barlow, E.~A.~Perkins, Brownian motion on the Sierpi\'nski gasket, {\em Probab. Theory
		Related Fields} {\bf 79} (1988), no. 4, 543--623.
	
	\bibitem[Bat]{Bat} D.~Bate, 
	Structure of measures in Lipschitz differentiability spaces.
	{\em J. Amer. Math. Soc. }  {\bf 28} (2015), no. 2, 421--482.
	
	
	\bibitem[BKO]{BKO} D.~Bate, I.~Kangasniemi,  T.~Orponen,
	Cheeger's differentiation theorem via the multilinear Kakeya inequality. 
{\em	Pure Appl. Funct. Anal.} {\bf 8} (2023), no. 6, 1587--1602.
	
	\bibitem[BDW]{BDW} F.~Baudoin, N.~Demni, J.~Wang, 
	Stochastic areas, horizontal Brownian motions, and hypoelliptic heat kernels.
	\emph{EMS Tracts Math.}, {\bf 37}
	\emph{EMS Press, Berlin}, 2024. ix+342 pp.
	
	\bibitem[Che]{Che} J.~Cheeger,
	Differentiability of Lipschitz functions on metric measure spaces.
{\em	Geom. Funct. Anal. }  {\bf 9} (1999), no. 3, 428--517.

	\bibitem[CK]{CK} J.~Cheeger, B.~Kleiner, Realization of metric spaces as inverse limits, and
bilipschitz embedding in $L^1$, \emph{Geom. Funct. Anal.} {\bf 23} (2013), 96--133.
	
\bibitem[CF]{CF} Z.-Q.\ Chen and M.\ Fukushima,
\emph{Symmetric Markov Processes, Time Change, and Boundary Theory},
London Math.\ Soc.\ Monogr.\ Ser., vol.\ 35,
Princeton University Press, Princeton, NJ, 2012. 

\bibitem[Dav15]{Dav} G.~C.~David, 
Tangents and rectifiability of Ahlfors regular Lipschitz differentiability spaces. 
{\em Geom. Funct. Anal. }  {\bf 25 }(2015), no. 2, 553--579.

\bibitem[Dav21]{Dav21} G.~C.~David, 
On the Lipschitz dimension of Cheeger-Kleiner. 
\emph{Fund. Math.} {\bf 253} (2021), no. 3, 317--358.

\bibitem[DGS]{DGS} B.~K.~Driver, L.~Gross, L.~Saloff-Coste, Holomorphic functions and subelliptic heat kernels over Lie groups.
{\em J. Eur. Math. Soc. (JEMS)} {\bf 11} (2009), no. 5, 941--978.

\bibitem[DV]{DV} M.~H.~A.~Davis, P.~Varaiya, 
The multiplicity of an increasing family of $\sigma$-fields.
{\em Ann. Probab.}   {\bf 2} (1974), 958--963.
 
 
 \bibitem[EG]{EG} L.~C.~Evans, R.~Gariepy, 
 Measure theory and fine properties of functions.
 Revised edition
{\em Textb. Math.}
{\em CRC Press, Boca Raton, FL}, 2015. xiv+299 pp.

\bibitem[FOT]{FOT} M.\ Fukushima, Y.\ Oshima, and M.\ Takeda,
\emph{Dirichlet Forms and Symmetric Markov Processes},
Second revised and extended edition, de Gruyter Studies in Mathematics, vol.\ 19,
Walter de Gruyter \& Co., Berlin, 2011.


\bibitem[GHL]{GHL}
A. Grigor'yan, J. Hu, K.-S. Lau. 
Generalized capacity, Harnack inequality and heat kernels of Dirichlet forms on metric spaces.
{\em J. Math. Soc. Japan} {\bf 67} 1485--1549 (2015).

	\bibitem[GrS]{GrS}  A. Grigor?yan, L. Saloff-Coste, Stability results for Harnack inequalities, {\em Ann. Inst. Fourier
	(Grenoble)} {\bf 55 }(2005), no. 3, 825--890. 
 

\bibitem[Hei]{Hei} J. Heinonen.  Lectures on Analysis on Metric Spaces, {\em Universitext. Springer-Verlag}, New York, 2001. x+140 pp.

\bibitem[Hin08]{Hin08}   M.\ Hino, Martingale dimensions for fractals.
{\em Ann. Probab.} {\bf 36} (2008), no. 3, 971--991.

	\bibitem[Hin10]{Hin10} M.\ Hino,
Energy measures and indices of Dirichlet forms, with applications to derivatives on some fractals,
\emph{Proc.\ Lond.\ Math.\ Soc.}\ (3) \textbf{100} (2010), no.\ 1, 269--302.

\bibitem[Hin13a]{Hin13} M.\ Hino,
Upper estimate of martingale dimension for self-similar fractals. 
{\em Probab. Theory Related Fields} {\bf 156}(2013), no.3-4, 739--793. 
 
 
 \bibitem[Hin13b]{Hin13b} M.\ Hino, 
 Measurable Riemannian structures associated with strong local Dirichlet forms. 
 {\em Math. Nachr.}  {\bf 286} (2013), no. 14-15, 1466--1478.
 
 
 \bibitem[Hin14]{Hin14} M.\ Hino, 
 Indices of Dirichlet forms (Japanese). 
 S$\overline{\text{u}}$gaku   {\bf 66} (2014), no. 1, 61--77.
English translation in {\em Sugaku Expositions}   {\bf 30} (2017), no. 2, 187–205

\bibitem[HKST]{HKST}  
J. Heinonen, P. Koskela, N. Shanmugalingam, J. T. Tyson.
Sobolev spaces on metric measure spaces. 
An approach based on upper gradients. {\em New Mathematical Monographs}, {\bf 27}. Cambridge University Press, Cambridge, 2015. xii+434 

\bibitem[KM20]{KM20} N.~Kajino, M.~Murugan,   On singularity of energy measures for symmetric diffusions with full off-diagonal heat kernel estimates.
{\em Ann. Probab.} {\bf 48} (2020), no. 6, 2920--2951.

\bibitem[Kei03]{Kei03} S.~Keith, 
Modulus and the Poincaré inequality on metric measure spaces.
\emph{Math. Z.}   {\bf 245} (2003), no. 2, 255--292.

\bibitem[Kei04]{Kei} S.~Keith,  
A differentiable structure for metric measure spaces. 
\emph{Adv. Math.}   {\bf 183} (2004), no. 2, 271--315.



\bibitem[KM16]{KM16} B.~Kleiner, J.~M.~Mackay,  
Differentiable structures on metric measure spaces: a primer. 
{\em Ann. Sc. Norm. Super. Pisa Cl. Sci. (5)}   {\bf 16} (2016), no. 1, 41--64.
  
  
  
  \bibitem[KS17]{KS17} B.~Kleiner,  A.~Schioppa,  
  PI spaces with analytic dimension 1 and arbitrary topological dimension.
 {\em Indiana Univ. Math. J.} {\bf 66} (2017), no. 2, 495--546.
  
  \bibitem[KZ12]{KZ12}
  P. Koskela, Y. Zhou, 
  Geometry and analysis of Dirichlet forms. 
  {\em Adv. Math.} {\bf 231} (2012), no. 5, 2755--2801. 
  
  \bibitem[KW]{KW} H.~Kunita, S.~Watanabe, 
  On square integrable martingales.
  {\em Nagoya Math. J.}   {\bf 30} (1967), 209--245.
  
  
 \bibitem[Kus89]{Kus89} S.~Kusuoka,
  Dirichlet forms on fractals and products of random matrices.
  {\em Publ. Res. Inst. Math. Sci.}   {\bf 25} (1989), no. 4, 659--680.
  
  \bibitem[Kus93]{Kus} S.~Kusuoka. Lecture on diffusion processes on nested fractals, \emph{Statistical mechanics and fractals}, Lecture Notes in Mathematics 1567 (Springer, Berlin, 1993) 39--98.
  
  \bibitem[KS87]{KS87} S.~Kusuoka, D.~Stroock, 
  Applications of the Malliavin calculus. III.
  {\em J. Fac. Sci. Univ. Tokyo Sect. IA Math.}   {\bf 34} (1987), no. 2, 391--442.
  
 \bibitem[Li]{Li} H.-Q.~Li,  
  Estimations optimales du noyau de la chaleur sur les groupes de type Heisenberg. 
 {\em J. Reine Angew. Math.}   {\bf 646} (2010), 195--233.
 
 \bibitem[LY]{LY} P.~Li, S.~T.~Yau, On the parabolic kernel of the Schrdinger operator, {\em Acta Math.} {\bf 156}
 (1986), no. 3-4, 153--201.
 
  
  \bibitem[MW]{MW} M.~Motoo, S.~Watanabe, 
  On a class of additive functionals of Markov processes.
  {\em J. Math. Kyoto Univ.}   {\bf 4} (1964/65), 429--469.
  
  
  \bibitem[Mur20]{Mur20} M.~Murugan,
  On the length of chains in a metric space, \emph{J. Funct. Anal.} {\bf  279} (2020), no. 6, 108627, 18 pp.
  
  \bibitem[Mur24+]{Mur24+} M.~Murugan,
  Diffusions and random walks with prescribed sub-Gaussian heat kernel estimates, arXiv:2410.15611 (preprint). 
  
 \bibitem[Pan]{Pan} P.~Pansu. M\'etriques de Carnot-Carath\'eodory et quasiisom\'etries des espaces sym\'etriques de rang un. {\em Ann. of Math.} (2), {\bf 129}(1):1--60, 1989.
 
 \bibitem[Rad]{Rad} H.~Rademacher,  
 \"{U}ber partielle und totale differenzierbarkeit von Funktionen mehrerer Variabeln und \"{u}ber die Transformation der Doppelintegrale. 
{\em Math. Ann.  } {\bf 79} (1919), no. 4, 340--359.
  
  \bibitem[Sal92a]{Sal}
  L. Saloff-Coste. A note on Poincar\'e, Sobolev, and Harnack
  inequalities. {\it Inter. Math. Res. Notices} {\bf 2} (1992), 27--38.
  
  \bibitem[Sal92b]{Sal92}	L.~Saloff-Coste, Uniformly elliptic operators on Riemannian manifolds,
  {\em J. Differential Geom.} {\bf 36} no. 2 (1992), 417--450.
  
	\bibitem[Sal02]{Sal02}  L. Saloff-Coste. Aspects of Sobolev-Type Inequalities, {\em London Mathematical Society Lecture Note Series}, {\bf 289}. Cambridge University Press, Cambridge, 2002. x+190 pp. 
	
	\bibitem[Sal10]{Sal10} L.~Saloff-Coste. The heat kernel and its estimates, Probabilistic approach to geometry,
	405--436, {\em Adv. Stud. Pure Math.},{\bf 57}, Math. Soc. Japan, Tokyo, 2010.
  
\bibitem[Sch]{Sch} A.~Schioppa,
  Derivations and Alberti representations. 
  {\em Adv. Math.}   {\bf 293} (2016), 436--528.
  
  \bibitem[Stu]{Stu} K.-T.~Sturm, Analysis on local Dirichlet spaces. III. The parabolic Harnack inequality, {\em J. Math. Pures Appl.} {\bf 75} (9)
  (1996) 273--297.
  
  	

	\bibitem[VSC]{VSC} N.~Th.~Varopoulos, L.~Saloff-Coste, T.~Couhlon, Analysis and Geometry of Groups, {\em Cambridge Tracts in Mathematics}, {\bf 100}. Cambridge University Press, Cambridge, 1992. xii+156 pp.
  
  
 \bibitem[Ven]{Ven} A.~D.~Ventcel', 
  Additive functionals on a multi-dimensional Wiener process. 
  {\em Dokl. Akad. Nauk SSSR}   {\bf 139} (1961), 13--16.
  
  \bibitem[Wea]{Wea} N.~Weaver, 
  Lipschitz algebras and derivations. II. Exterior differentiation. 
  {\em J. Funct. Anal.  } {\bf 178} (2000), no. 1, 64--112.
  
\end{thebibliography}
\end{document}